\newtheorem{theorem}{Theorem}[section]
\newtheorem{cor}{Corollary}[section]
\newtheorem{defin}{Definition}[section]
\newtheorem{lemma}{Lemma}[section]
\newcommand{\dbar}{\overline{\partial}}
\newcommand{\dz}{\partial_z}
\newcommand{\dzb}{\overline{\partial}_{z}}
\newcommand{\dkb}{\overline{\partial}_k}
\newcommand{\ra}{\rightarrow}
\newcommand{\kp}{k^{\prime}}
\newcommand{\SI}{{\mathcal{S}}}
\newcommand{\TT}{{\mathcal{T}}}
\newcommand{\QQ}{{\mathcal{Q}}}
\newcommand{\QQp}{{\mathcal{Q}}^+}
\newcommand{\QQm}{{\mathcal{Q}}^-}
\newcommand{\T}{{\mathbf{t}}}
\newcommand{\fii}{\varphi}
\newcommand{\eps}{\varepsilon}
\newcommand{\R}{{\mathbb R}}
\newcommand{\C}{{\mathbb C}}
\newcommand{\CC}{{\mathcal{C}}}
\newcommand{\qis}{q}
\newcommand{\qnv}{q^{\mbox{\tiny \rm NV}}}
\numberwithin{equation}{section}
\begin{document}

\title[Novikov-Veselov equation and the ISM]{ The Novikov-Veselov Equation and the \\  Inverse Scattering Method, Part I: Analysis}

\author{M Lassas, J L Mueller, S Siltanen and A Stahel}

\begin{abstract}
The Novikov-Veselov (NV) equation is a
  (2+1)-dimensional nonlinear evolution equation that generalizes the
  (1+1)-dimensional Korteweg-deVries (KdV) equation. Solution of the NV equation using the inverse scattering method has been discussed in the  literature, but only formally (or with smallness assumptions in case of nonzero energy) because of the possibility of exceptional points, or singularities in the scattering data. In this work, absence of exceptional points is proved at zero energy for  evolutions with compactly supported, smooth and rotationally symmetric initial data of the conductivity type: $q_0=\gamma^{-1/2}\Delta\gamma^{1/2}$ with a strictly positive function $\gamma$. The inverse scattering evolution is shown to be well-defined, real-valued, and preserving conductivity-type. There is no  smallness assumption on the initial data. 
\end{abstract}

\maketitle

Version 4 (submitted), May 19, 2011

\setcounter{tocdepth}{1}


\section{Introduction}

\noindent
The nonlinear Novikov-Veselov equation for $q_\tau=q_\tau(z)=q_\tau(x,y)$ is 
\begin{equation}  \label{evolution}
  \left\{
  \begin{array}{rcl}
  \displaystyle 
  \frac{\partial q_\tau}{\partial \tau} &=& 
  \displaystyle 
  -\dz^3q_\tau -\dzb^3q_\tau + \frac{3}{4}\,\dz(q_\tau v_\tau) + \frac{3}{4}\,\dzb(q_\tau\overline{v}_\tau),\\
  \\
   v_\tau(z)   &=&  \dzb^{-1}\dz q_\tau(z),
\end{array}
\right.
\end{equation}
where $\tau\geq 0$ and $\dzb=\frac 12(\frac{\partial}{\partial x} + i\frac{\partial}{\partial y})$. Equation (\ref{evolution})  is a (2+1)-dimensional generalization of the celebrated Korteweg-de Vries (KdV) equation \cite{KdV}. In 1984, S.~P.~Novikov and Veselov introduced (\ref{evolution}) in a periodic setting in \cite{novikovveselov,veselovnovikov}  as a continuation of the work by Nizhnik \cite{nizhnik}.  Equation (\ref{evolution}) is the most natural
generalization of the KdV equation to dimension (2+1) since the variables $x$ and $y$ have more symmetric roles in (\ref{evolution})
than they do in other generalizations, such as the Kadomtsev-Petviashvili equation, see \cite{bogdanov}.

The study of equation (\ref{evolution}) in the non-periodic setting ($z\in\R^2$) via the inverse scattering method was initiated by Boiti, Leon, Manna
and Pempinelli \cite{boiti86,boiti} and continued by Tsai \cite{tsai,tsai2,tsai3}. They discuss the following formal inverse scattering scheme for solving the Cauchy problem for (\ref{evolution}):
\begin{equation}\label{diagram}
\begin{picture}(200,122)
\thinlines
 \put(30,105){\vector(1,0){130}}
 \put(0,102){$\T^+_0(k)$}
 \put(165,102){$\T^+_\tau(k)$}
 \put(55,110){$\exp(i \tau(k^3+\overline{k}^3))\cdot$}
 \put(7,15){\vector(0,1){77}}
 \put(13,92){\vector(0,-1){77}}
 \put(17,67){$\QQp$}
 \put(-12,67){$\TT^+$}
 \put(0,0){$q_0(z)$}
 \put(156,67){$\TT^+$}
 \put(175,46){\vector(0,1){47}}
 \put(181,92){\vector(0,-1){47}}
 \put(184,67){$\QQp$}
 \put(165,28){$\qis_{\tau}(z)$}
 \put(30,3){\vector(1,0){130}}
 \put(40,7){{\footnotesize nonlinear evolution (\ref{evolution})}}
 \put(165,0){$\qnv_{\tau}(z)$,}
\end{picture}
\end{equation}
where $\TT^+$ and $\QQp$ stand for the direct and inverse nonlinear Fourier transform, respectively, and the function $\T^+_\tau:\C\ra \C$ is called the {\em scattering transform}. Precise definitions of  $\TT^+$, $\QQp$ and  $\T^+_\tau$ are given below in Section \ref{sec:ISM}. 
The definition of $\qis_{\tau}$ involves point-wise multiplication in the transform domain:
\begin{equation}\label{qisdef}
  \qis_{\tau}:=\QQp\Big(e^{i \tau(k^3+\overline{k}^3)}\,\T^+_0(k)\Big),
\end{equation}
and $\qnv_{\tau}$ is defined as the solution of (\ref{evolution}) with initial condition $\qnv_0=q_0$. 

The diagram (\ref{diagram}) is written with the hope that for certain initial data $q_0$ all the maps in (\ref{diagram}) would be well-defined and  $\qis_{\tau}=\qnv_\tau$. Then the nonlinear Novikov-Veselov equation (\ref{evolution}) could be solved by a linear operation on the transform side, analogously to the celebrated inverse scattering method for the KdV equation \cite{GGKM}. However, so far the analysis of diagram (\ref{diagram}) is only formal, and the identity $\qis_{\tau}=\qnv_\tau$ has not been rigorously proved for any class of initial data.

Why is the inverse scattering method  (\ref{diagram}) so difficult to analyze? The key obstacle in the investigation of (\ref{diagram}) is the possibility of {\em exceptional points} of $\qis_{\tau}$. Exceptional points are values of the generalized frequency-domain variable $k\in\C$ at which the solutions to the related (non-physical) scattering problem are not unique, meaning that nonzero radiating solutions exist for zero incident field. At exceptional points the scattering data $\T^+_\tau(k)$ is not well-defined and possibly singular. The operator $\QQp$ is not defined for singular argument functions, which prevents the use of (\ref{diagram}) if there are exceptional points. Furthermore, taking a small initial potential $q_0$ does not save the day because the related Faddeev Green's function has a $\log |k|$ singularity at $k=0$. Consequently, Neumann series techniques cannot be used in general to prove the absence of exceptional points; currently one has to resort to (rather instable) Fredholm arguments.

The following class of {\em conductivity-type potentials} is useful as a source of initial data because such potentials do not have exceptional points \cite[Lemma 1.5]{nachman}.

\vspace{-.5mm}

\begin{defin}\label{def:condtype}
A potential $q\in L^p(\R^2)$ with $1<p<2$ is  of conductivity type if $q=\gamma^{-1/2}\Delta\gamma^{1/2}$ for some real-valued
$\gamma\in L^\infty(\R^2)$ satisfying $\gamma(z)\geq c > 0$ for almost every $z\in\R^2$ and $\nabla(\gamma^{1/2})\in L^p(\R^2)$.
\end{defin}

\vspace{-.5mm}

\noindent The term ``conductivity'' and the seemingly superficial square roots in definition \ref{def:condtype} come from the related studies of Calder\'on's inverse conductivity problem, see \cite{nachman}. We use these terms and notations here because it turns out that the inverse scattering evolution preserves conductivity-type, and the evolving conductivity may have a (yet unknown) physical interpretation.

Let us recall what is known about the diagram (\ref{diagram}).
\begin{itemize}
\item {\bf Boiti, Leon, Manna and Pempinelli 1987} \cite{boiti}: assume that $q_0$ is such that the solution $\qnv_{\tau}$ to (\ref{evolution}) exists and does not have exceptional points. Then the scattering data evolves as $\TT^+(\qnv_{\tau})=e^{i\tau(k^3+\overline{k}^3)}\TT^+(q_0)$. 
\item {\bf Tsai 1994} \cite{tsai3}: take $q_0$ from a certain class of small and rapidly decaying initial data (the class excludes conductivity-type potentials). Assume that $q_0$ has no exceptional points and that $\qis_{\tau}$ is well-defined by (\ref{qisdef}). Then  $\qis_{\tau}$ is a solution of the Novikov-Veselov equation (\ref{evolution}). 
\item {\bf Nachman 1996} \cite{nachman}: let $q_0$  be of conductivity type. Then $q_0$ does not have exceptional points and the scattering data $\TT^+(q_0)$ is well-defined.
\item {\bf L-M-S 2007} \cite{LMS}: let $q_0$ be a smooth, compactly supported conductivity-type potential with $\gamma\equiv 1$ outside supp$(q_0)$. Then $\QQp(\TT^+ q_0)=q_0$. Further, formula (\ref{qisdef}) gives a well-defined continuous function $\qis_{\tau}:\R^2\ra \C$ satisfying the estimate $|\qis_\tau(z)|\leq C(1+|z|)^{-2}$ for all $\tau> 0$. 
\end{itemize}
\noindent
Nachman's work paved the way for rigorous results: all studies about diagram (\ref{diagram}) published before \cite{nachman} were formal as they had to assume the absence of exceptional points without specifying acceptable initial data.

We remark that the above discussion concerns only the ``zero-energy case'' where initial data $q_0(z)$ tends to zero when $|z|\ra\infty$. There is a body of work concerning inverse scattering solutions for the Novikov-Veselov solution when the initial data tends to a nonzero constant at infinity, see \cite{grinevichmanakov,grinevichnovikov,grinevichnovikov2,grinevichnovikov3} and the review article \cite{grinevich2000}. However, those results are based on a smallness assumption of initial data being close enough to a nonzero constant function; then there are no exceptional points. This paper is concerned with the zero-energy case where smallness assumptions cannot be used for proving the absence of exceptional points.

We prove the following new results. Let $q_0$ be a real-valued, infinitely smooth, compactly supported con\-ducti\-vity-type potential with $\gamma\equiv 1$ outside supp$(q_0)$. If the initial data has the rotational symmetry $q_0(z)=q_0(|z|)$ for all $z\in\R^2$, then $\qis_\tau$ stays real-valued for all $\tau\geq 0$. Furthermore, $\qis_\tau$ is a conductivity-type potential and does not have exceptional points. Also, the scattering data of $\qis_\tau$ is well-defined and  the following identity holds:
\begin{equation}\label{scatidentity}
\TT^+(\QQp \T^+_\tau)=\T^+_\tau.
\end{equation}
The precise statement and assumptions are given below in Corollary \ref{conclcor}. We remark that there is no smallness assumption on the initial data. 


Our results are not completely restricted to compactly supported initial data satisfying $q_0(z)=q_0(|z|)$. For example, evolutions from symmetric initial data are valid starting points of another evolution; then the initial data is not necessarily symmetric or compactly supported. See Section \ref{sec:concl} for more details. 

In Part II of this paper we compare numerically $\qis_{\tau}$ and $\qnv_\tau$ for several examples with rotationally symmetric, compactly supported initial data of conductivity type. The evolutions $\qis_{\tau}$ and $\qnv_\tau$ are found to agree with high precision, suggesting that the equality $q_\tau=\qnv_{\tau}$ holds. However, rigorous proof of that identity remains an open theoretical problem; see Appendix \ref{appendix_Tsai}.

This paper is organized as follows. In section \ref{sec:ISM} we define the direct and inverse nonlinear Fourier transforms  $\TT^\pm$ and $\QQ^\pm$.  In section \ref{sec:CGOprops} we prove new estimates for the complex geometric optics solutions used in the inverse scattering method. Section \ref{sec:radial} contains results concerning rotationally symmetric initial data. Section \ref{sec:preserveconductivity} is devoted for a proof that if the evolved potential $\qis_\tau$ stays real-valued, then it stays conductivity-type. In Section \ref{sec:scatevol} we prove identity (\ref{scatidentity}), and in Section \ref{sec:concl} we conclude our results. 
Throughout the paper we denote $\langle z\rangle=1+|z|$ and abuse notation by writing $k_1+ik_2=k=(k_1,k_2)$ and $x+iy=z=(x,y)$.

\section{The inverse scattering method} \label{sec:ISM}

\subsection{The direct scattering maps $\TT^\pm$}

Consider the Schr\"odinger equation
\begin{equation}\label{Schrode}
  (-\Delta+q)\psi^{\pm}(\,\cdot\,,k)=0,
\end{equation}
where $q\in L^p(\R^2)$ is a real-valued potential with $1<p<2$ and $k$ is a complex parameter.  We look for {\em complex geometrical optics solutions} $\psi^{\pm}$ of (\ref{Schrode}) with asymptotic behavior 
\begin{eqnarray*}
 &&\psi^+(z,k)\sim e^{ikz}=\exp(i(k_1+ik_2)(x+iy)),\\
 &&\psi^-(z,k)\sim e^{ik\overline{z}}=\exp(i(k_1+ik_2)(x-iy)).
\end{eqnarray*} 
Such solutions were first introduced by Faddeev \cite{faddeev}. More precisely, we require
\begin{eqnarray}
  \label{exp_asymp}
  e^{-ikz}\psi^+(z,k)-1 &\in& W^{1,\tilde{p}}(\R^2), \\
  \label{exp_asympminus}
  e^{-ik\overline{z}}\psi^-(z,k)-1 &\in& W^{1,\tilde{p}}(\R^2),
\end{eqnarray}
where $1/\tilde{p}=1/p-1/2$.  Here $W^{1,\tilde{p}}(\R^2)$ is the Sobolev space consisting of $L^{\tilde{p}}(\R^2)$ functions whose (distributional) partial derivatives belong to $L^{\tilde{p}}(\R^2)$ as well. Note that $\tilde{p}>2$ and by Sobolev's imbedding theorem $W^{1,\tilde{p}}(\R^2)$ functions are continuous. 

Given a potential $q$, there may be complex numbers $k$ for which the solutions $\psi^{\pm}$ of (\ref{Schrode}) satisfying (\ref{exp_asymp}) and (\ref{exp_asympminus}) are not unique. Such $k$ are called {\em exceptional points} of $q$.

If $q$ has no exceptional points, then we write $\mu^+(z,k):=e^{-ikz}\psi^+(z,k)$ and $\mu^-(z,k):=e^{-ik\overline{z}}\psi^-(z,k)$ and write formally
\begin{eqnarray}\label{tdef}
  \T^+(k) &:=& \int_{\R^2} e^{i(kz+\overline{k}\overline{z})}q(z)\mu^+(z,k)dz, \\
  \label{tdefm}
    \T^-(k) &:=& \int_{\R^2} e^{i(\overline{k}z+k\overline{z})}q(z)\mu^-(z,k)dz.
\end{eqnarray}
If the integrals in (\ref{tdef}) and (\ref{tdefm}) are convergent, then $\T^\pm:\C\ra \C$ are well-defined and we set $\TT^\pm(q) = \T^\pm$. 

For example, compactly supported conductivity-type potentials do not have exceptional points and lead to convergent integrals in (\ref{tdef}) and (\ref{tdefm}), see \cite{nachman}.

Real-valuedness of $q$ results in symmetries in scattering data. Complex conjugating equation (\ref{Schrode}) for $\psi^+$ gives
\begin{equation}\label{schjorina1}
0=\overline{  (-\Delta+q)\psi^{+}(z,k) }=   (-\Delta+q)\overline{\psi^{+}(z,k)},
\end{equation}
and conjugating the corresponding asymptotic condition (\ref{exp_asymp}) gives
\begin{equation}\label{schjorina2}
\overline{ e^{-ikz}\psi^+(z,k)-1 } =  e^{-i(-\overline{k})\overline{z}}\overline{\psi^+(z,k)}-1 \in W^{1,\tilde{p}}(\R^2).
\end{equation}
Comparing (\ref{Schrode}) with (\ref{schjorina1}) and (\ref{exp_asympminus}) with (\ref{schjorina2}), and using uniqueness, shows that
\begin{equation} \label{pm_psi}
\overline{\psi^+(z,k)}=\psi^-(z,-\overline{k}).
\end{equation}
Furthermore, complex conjugating equation  (\ref{tdef}) and substituting (\ref{pm_psi})  yields
\begin{equation}\label{pm_tksym}
  \overline{\T^+(k)}=\T^-(-\overline{k}).
\end{equation}

\subsection{The inverse scattering maps $\QQ^\pm$}\label{sec:Qmaps}

Given two functions $\T^{\pm}:\C\ra\C$, consider the D-bar equations
\begin{eqnarray}
\label{dbar_eq_p}
  \frac{\partial}{\partial\overline{k}} \,\mu^+(z,k) &=& \frac{\T^+(k)}{4\pi\overline{k}}e^{-i(kz+\overline{k}\overline{z})}\overline{\mu^+(z,k)}, \\
\label{dbar_eq_m}
  \frac{\partial}{\partial\overline{k}}\, \mu^-(z,k) &=& \frac{\T^-(k)}{4\pi\overline{k}}e^{-i(k\overline{z}+\overline{k}z)}\overline{\mu^-(z,k)},
\end{eqnarray}
with a fixed parameter $z\in\R^2$ and requiring large $|k|$ asymptotics $\mu^{\pm}(z,\cdot)-1\in L^\infty\cap L^{r}(\C)$ for some $2<r<\infty$. 

Assuming that equations (\ref{dbar_eq_p}) and (\ref{dbar_eq_m}) have unique solutions with the appropriate asymptotic properties, set
$\psi^+(z,k):=e^{ikz}\mu^+(z,k)$ and $\psi^-(z,k):=e^{ik\overline{z}}\mu^-(z,k)$ and define formally 
\begin{eqnarray}\label{def:QQp}
    (\QQp\T^+)(z) &:=& \frac{i}{\pi^2}\dzb\int_{\C}\frac{\T^+(k)}{\overline{k}}\,e^{-ikz}\,\overline{\psi^+(z,k)} dk, \\
\label{def:QQm}
    (\QQm\T^-)(z) &:=& \frac{i}{\pi^2}\dz\int_{\C}\frac{\T^-(k)}{\overline{k}}\,e^{-ik\overline{z}}\,\overline{\psi^-(z,k)} dk, 
\end{eqnarray}
where $dk$ denotes Lebesgue measure:
$
\int_\C f(k)dk = \int_{\R^2} f(k_1,k_2) dk_1dk_2.
$

The inverse transform (\ref{def:QQp}) first appeared in \cite[formula (4.10)]{boiti}.  See the work
of Beals \& Coifman \cite{bealscoifman1,bealscoifman2,bealscoifman3,bealscoifman4}, Ablowitz \& Nachman
\cite{AblowitzNachman,NachmanAblowitz} and Henkin \& Novikov \cite{HenkinNovikov} for early references on the $\dbar$ method.

The functions $\T^{\pm}:\C\ra\C$ need to be ``well-behaved'' in order for equations (\ref{dbar_eq_p}) and (\ref{dbar_eq_m}) to be uniquely solvable, for the integrals in (\ref{def:QQp}) and (\ref{def:QQm}) to be convergent, and for the derivatives in (\ref{def:QQp}) and (\ref{def:QQm}) to make sense. According to \cite{LMS}, one example of good-enough behaviour is the following pair of assumptions:
$$
  \frac{\T^{\pm}(k)}{k}\in \SI(\C),\qquad   \frac{\T^{\pm}(k)}{\overline{k}}\in \SI(\C),
$$
where $\SI(\C)$ denotes rapidly decaying and infinitely smooth functions of Schwartz.

\section{Properties of solutions of the $\dbar$ equations} \label{sec:CGOprops}

\noindent
This section is concerned with solutions $\mu^\pm(z,k)$ of the $\dbar$ equations (\ref{dbar_eq_p}) and (\ref{dbar_eq_m}). The functions $\mu^\pm(z,0)$ are especially important as they will play a central role in later sections in the analysis of conductivity-type potentials.
We start by analyzing the decay in $|\mu^\pm(z,0)-1|$ when $|z|\ra\infty$.
\begin{lemma}\label{lemma:muz0_estimate}
Let $\T^\pm:\C\ra\C$ satisfy 
$$
  |\T^{\pm}(k)|\leq C|k|^2 \mbox{ for small }|k|,\qquad\frac{\T^{\pm}(k)}{k}\in \SI(\C),\qquad   \frac{\T^{\pm}(k)}{\overline{k}}\in \SI(\C).
$$
Fix $2<r<\infty$. For every $z\in\R^2$, let $\mu^\pm(z,k)$ be the unique solutions of the D-bar equations
(\ref{dbar_eq_p}) and (\ref{dbar_eq_m}) 
 with the asymptotic condition $\mu^\pm(z,\cdot)-1\in L^{r}\cap L^{\infty}$. 

Then the following estimate holds for all $z\in\R^2:$
\begin{equation}\label{mu0_est1}
  |\mu^\pm(z,0)-1| \leq C\langle z\rangle^{-1}.
\end{equation}
\end{lemma}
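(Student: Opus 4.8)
The plan is to treat the $\dbar$ equation (\ref{dbar_eq_p}) at $z$ fixed as a fixed-point equation for $\mu^+(z,\cdot)$ and to extract the rate of decay in $z$ from the oscillatory factor $e^{-i(kz+\overline k\overline z)}$. Writing $\nu^\pm(z,k)=\mu^\pm(z,k)-1$, equation (\ref{dbar_eq_p}) is equivalent to the integral equation
\begin{equation*}
\nu^+(z,k) = \frac{1}{\pi}\int_\C \frac{1}{k-k'}\,\frac{\T^+(k')}{4\pi\overline{k'}}\,e^{-i(k'z+\overline{k'}\overline z)}\,\overline{\bigl(1+\nu^+(z,k')\bigr)}\,dk',
\end{equation*}
via the Cauchy transform (the solid Cauchy transform $\dbar^{-1}$), and I want to evaluate this at $k=0$ and estimate it by $C\langle z\rangle^{-1}$. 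Under the hypotheses, $g^\pm(k):=\T^\pm(k)/(4\pi\overline k)$ is a Schwartz function (the bound $|\T^\pm(k)|\le C|k|^2$ near $0$ kills the $1/\overline k$ singularity), and $g^\pm(k)/k = \T^\pm(k)/(4\pi|k|^2)$ is $L^1_{loc}$ and decays rapidly, so all the integrals below converge absolutely.

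First I would record the a priori bound: by the $L^r\cap L^\infty$ solvability assumed in the statement (this is exactly the well-posedness granted by \cite{LMS} under the stated decay hypotheses), one has $\|\nu^+(z,\cdot)\|_{L^\infty} + \|\nu^+(z,\cdot)\|_{L^r} \le C$ uniformly in $z\in\R^2$; the uniformity follows because the relevant integral operator has a kernel built from the $z$-independent Schwartz function $g^+$ dressed by a unimodular oscillation. With this in hand, the $k=0$ value splits as
\begin{equation*}
\nu^+(z,0) = \frac{1}{\pi}\int_\C \frac{g^+(k')}{-k'}\,e^{-i(k'z+\overline{k'}\overline z)}\,dk' \;+\; \frac{1}{\pi}\int_\C \frac{g^+(k')}{-k'}\,e^{-i(k'z+\overline{k'}\overline z)}\,\overline{\nu^+(z,k')}\,dk'.
\end{equation*}
The first term is (a constant times) the Fourier transform of the Schwartz function $k'\mapsto g^+(k')/k'$ evaluated at the frequency determined by $z$; since that function is Schwartz its Fourier transform is Schwartz, hence bounded by $C_N\langle z\rangle^{-N}$ for any $N$, in particular by $C\langle z\rangle^{-1}$. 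The real work is the second term.

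For the second term I would use the oscillation against the a priori $L^r$ bound. One route: integrate by parts in $k'$ using that $e^{-i(k'z+\overline{k'}\overline z)} = \frac{1}{-i\overline z}\,\dkb e^{-i(k'z+\overline{k'}\overline z)}$ (when $z\ne 0$), moving the $\dbar_k$ onto $g^+(k')\overline{\nu^+}/k'$; the derivative hits either the Schwartz factor (producing again something controllable and gaining a factor $|z|^{-1}$), or it produces $\dkb\overline{\nu^+} = \overline{\dk\nu^+}$, which by the $\dbar$ equation itself is again $\overline{g^+(k')}e^{+i(k'z+\overline{k'}\overline z)}\overline{(1+\nu^+)}$ — reinserting this closes the estimate and yields the $\langle z\rangle^{-1}$ gain, with the $|z|\le 1$ region handled trivially by the uniform $L^\infty$ bound. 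The analogous argument for $\mu^-$ is identical after swapping $z\leftrightarrow\overline z$. The main obstacle I anticipate is precisely making this integration-by-parts / oscillatory-integral step rigorous: the factor $1/k'$ has a singularity at the origin and the differentiated kernel must be controlled near $k'=0$ and at infinity simultaneously, balancing the $L^r$ norm of $\nu^+$ (which requires $r>2$, hence the hypothesis) against the $L^{r'}$ norm of the dressed Schwartz kernel; a clean way to organize this is a Hölder split $L^{r}\cdot L^{r'}$ together with the observation that $\overline{\partial_{k'}}$ applied to the kernel reproduces, up to Schwartz factors, the same structure one started with, so the argument is essentially self-improving and one extra derivative buys exactly one power of $\langle z\rangle^{-1}$.
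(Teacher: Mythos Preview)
Your overall architecture matches the paper's proof: write $\mu^+(z,0)-1$ via the Cauchy transform, split off the constant part of $\overline{\mu^+}$, and gain the factor $|z|^{-1}$ on the remainder by one integration by parts in the $k'$ variable, feeding the $\dbar$ equation back in when the derivative lands on $\overline{\nu^+}$. However, there is a genuine gap in the integration-by-parts step.

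You write $e^{-i(k'z+\overline{k'}\overline z)} = \frac{1}{-i\overline z}\,\dkb e^{-i(k'z+\overline{k'}\overline z)}$ and move $\dkb$ onto the rest. When $\dkb$ hits $\overline{\nu^+}$ you correctly get $\dkb\overline{\nu^+}=\overline{\partial_{k'}\nu^+}$, but this is \emph{not} supplied by the $\dbar$ equation: equation~(\ref{dbar_eq_p}) controls $\dkb\nu^+$, hence $\overline{\dkb\nu^+}=\partial_{k'}\overline{\nu^+}$, not $\overline{\partial_{k'}\nu^+}$. Your identification ``which by the $\dbar$ equation itself is again $\overline{g^+(k')}e^{+i(k'z+\overline{k'}\overline z)}\overline{(1+\nu^+)}$'' is therefore unjustified (and also has a stray conjugate on $1+\nu^+$). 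The fix is to integrate by parts with $\partial_{k'}$ instead, using $e^{-i(k'z+\overline{k'}\overline z)}=\frac{1}{-iz}\,\partial_{k'}e^{-i(k'z+\overline{k'}\overline z)}$; then the derivative landing on $\overline{\nu^+}$ produces $\partial_{k'}\overline{\nu^+}=\overline{\dkb\nu^+}$, which \emph{is} exactly what (\ref{dbar_eq_p}) gives, and the argument closes as you intended. This is precisely what the paper does. Note also that $\partial_{k'}(1/\overline{k'})=\pi\delta_0$ in the distributional sense, so a delta contribution appears; it vanishes because $\T^+(k)/k\to 0$ as $k\to 0$ under the hypothesis $|\T^+(k)|\le C|k|^2$.

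A secondary point: you assert that $k'\mapsto g^+(k')/k'=\T^+(k')/(4\pi|k'|^2)$ is Schwartz, hence its Fourier transform decays like $\langle z\rangle^{-N}$ for every $N$. The hypotheses only force this function to be bounded near the origin and rapidly decaying at infinity; smoothness at $k'=0$ is not guaranteed by $\T^+/k,\ \T^+/\overline k\in\SI$ and $|\T^+|\le C|k|^2$ alone. The paper accordingly treats the first term by the same single integration by parts, obtaining only the $\langle z\rangle^{-1}$ decay that the lemma asserts.
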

\begin{proof}
We prove estimate (\ref{mu0_est1}) for $\mu^+$ only; the proof for $\mu^-$ is analogous.

Denote $e^+_z(k):=\exp(i(kz+\overline{k}\overline{z}))$ and use equation
\begin{equation}\label{def:muz0}
  \mu^+(z,0):=\lim_{s\ra 0}\mu^+(z,s) = 1 + \frac{1}{4\pi^2}\int_{\R^2}\frac{\T^+(k)}{|k|^2}e^+_{-z}(k)\overline{\mu^+(z,k)}dk
\end{equation}
(given in \cite[formula (0.38)]{nachman}) to write
\begin{eqnarray}
  \mu^+(z,0)-1
  &=& \nonumber
  \frac{1}{4\pi^2}\int_\C\frac{\T^+(k)}{|k|^2}e^+_{-z}(k)\overline{\mu^+(z,k)}dk\\
  &=& \label{muz0_1}
  \frac{1}{4\pi^2}\int_\C\frac{\T^+(k)}{|k|^2}e^+_{-z}(k)dk\\
  && \label{muz0_2}
  +\frac{1}{4\pi^2}\int_\C\frac{\T^+(k)}{|k|^2}e^+_{-z}(k)(\overline{\mu^+(z,k)-1})dk.
\end{eqnarray}
Now term (\ref{muz0_1}) is finite because $|k|^{-2}\,\T^+(k)\in L^1(\C)$ by assumption. Term (\ref{muz0_2}) is finite because of H\"older's inequality and the inequalities $\||k|^{-2}\,\T^+(k)\|_{L^{r^\prime}(\C)}<\infty$ and $\|\mu^+(z,\,\cdot\,)-1\|_{L^r(\C)}\leq C<\infty$ with $C$ not depending on $z$ (see \cite[formula 4.1]{nachman}). Therefore
\begin{equation}\label{supnormmu0}
 \|\mu^+(z,0)\|_{L^\infty(\R^2)}<\infty.
\end{equation}

We can bound term (\ref{muz0_1}) as follows to study its behaviour when $|z|\ra \infty$:
\begin{eqnarray}
  && \nonumber
  |z|\left|\int_\C\frac{\T^+(k)}{|k|^2}e^+_{-z}(k)dk\right|\\
  &=& \nonumber
  \left|-iz\int_\C\frac{\T^+(k)}{|k|^2}e^+_{-z}(k)dk\right|\\
  &=& \nonumber
  \left|\int_\C\frac{\T^+(k)}{k\overline{k}}\frac{\partial e^+_{-z}(k)}{\partial k}dk\right|\\
  &\leq& \nonumber
  \left|\int_\C \frac{e^+_{-z}(k)}{\overline{k}}\frac{\partial}{\partial k}(\frac{\T^+(k)}{k})dk\right|\\
  && \label{vanishintterm}
  +\left|\int_\C \pi\delta_0(k)\frac{\T^+(k)}{k} e^+_{-z}(k)dk\right|\\
  &\leq& \nonumber
  \left\|\frac{1}{\overline{k}}\frac{\partial}{\partial k}(\frac{\T^+(k)}{k})\right\|_{L^1(\C)}\\
  &\leq& \nonumber
  \left\|\frac{1}{\overline{k}}\right\|_{L^1(|k|<1)}\left\|\frac{\partial}{\partial k}(\frac{\T^+(k)}{k})\right\|_{L^\infty(\C)}
  +\left\|\frac{\partial}{\partial k}(\frac{\T^+(k)}{k})\right\|_{L^1(\C)}\\
  &<& \label{muz0term1est}
 \infty.
\end{eqnarray}
Note that term (\ref{vanishintterm}) vanishes since $\lim_{k\ra 0}\T^+(k)/k=0$ by assumption .

Let us now bound term (\ref{muz0_2}).
\begin{eqnarray}
  &&\nonumber
  |z| \left|\int_{\C} \frac{\T^+(k)}{|k|^2} e^+_{-z}(k) (\overline{\mu^+(z,k)-1}) dk\right|\\
  &=& \nonumber
  \left|\int_{\C} \frac{\T^+(k)}{k\overline{k}} \frac{\partial e^+_{-z}(k)}{\partial k}(\overline{\mu^+(z,k)-1})dk\right|\\
  &\leq& \label{muz0term2est1}
  \left|\int_{\C} \frac{1}{\overline{k}} e^+_{-z}(k) (\overline{\mu^+(z,k)-1}) \frac{\partial}{\partial k}(\frac{\T^+(k)}{k}) dk\right|\\
  && \label{muz0term2est_0}
  +\left|\int_{\C} \pi\delta_0(k)\frac{\T^+(k)}{k} e^+_{-z}(k) (\overline{\mu^+(z,k)-1}) dk\right|\\
  &&\label{muz0term2est2}
  +\left|\int_{\C} \frac{\T^+(k)}{|k|^2}  e^+_{-z}(k)\overline{\dbar_k \mu^+(z,k)}dk\right|.
\end{eqnarray}
Similarly to the case of term (\ref{vanishintterm}), the term (\ref{muz0term2est_0}) vanishes. We can bound (\ref{muz0term2est1}) using H\"older inequality (we have $1<r^\prime<2$ since
$2<r<\infty$) and \cite[Lemma 3.3]{LMS}:
\begin{equation}\label{muxxxxxx}
  \left\|\frac{1}{\overline{k}}\frac{\partial}{\partial k}(\frac{\T^+(k)}{k})\right\|_{L^{r^\prime}(\C)} 
  \|\mu^+(z,\,\cdot\,)-1\|_{L^r(\C)} \leq C\langle z\rangle^{-1}.
\end{equation}
The $L^{r^\prime}$ norm in (\ref{muxxxxxx}) is finite because at infinity we have rapid decay and near the origin the derivative of $\T^+(k)/k\in {\mathcal S}(\C)$ is smooth and $|k|^{-r^\prime}$ is integrable as $r^\prime<2$.

To bound (\ref{muz0term2est2}) note that we get from the $\dbar$ equation
\begin{equation}\label{dbarcorollary}
  \frac{\partial}{\partial\overline{k}} \mu^+(z,k) = \frac{\T^+(k)}{4\pi\overline{k}} e^+_{-z}(k)(\overline{\mu^+(z,k)-1}) + \frac{\T^+(k)}{4\pi\overline{k}} e^+_{-z}(k).
\end{equation}
Estimate (\ref{muz0term2est2}) using (\ref{dbarcorollary}) and \cite[Lemma 3.3]{LMS}:
\begin{eqnarray}
  && \nonumber \!\!\!\!\!\!\!\!\!\!\!
  \left|\int_{\C} \frac{\T^+(k)}{|k|^2}  e^+_{-z}(k)\overline{\left(\frac{\T^+(k)}{4\pi\overline{k}} e^+_{-z}(k)(\overline{\mu^+(z,k)-1}) +
  \frac{\T^+(k)}{4\pi\overline{k}} e^+_{-z}(k)\right)}dk\right|\\
  &\leq& \label{muz0term2est3}
  \frac{1}{4\pi}\left|\int_{\C} \frac{|\T^+(k)|^2}{k|k|^2}(\mu^+(z,k)-1)dk\right| +
  \frac{1}{4\pi}\left|\int_{\C} \frac{|\T^+(k)|^2}{k|k|^2}dk\right|\\
  &\leq& \label{muz0term2est3B}
  \frac{1}{4\pi}\left\|\frac{1}{k} |\frac{\T^+(k)}{k}|^2\right\|_{L^{r^\prime}(\C)} \|\mu^+(z,k)-1\|_{L^{r}(\C)}  + C^\prime \\
  &\leq& \label{muxxx}
  C\langle z\rangle^{-1} + C^\prime.
\end{eqnarray}
Note that the second integral in (\ref{muz0term2est3}) and the $L^{r^\prime}$ norm in (\ref{muz0term2est3B}) are finite by the assumption that $|\T^+(k)|\leq C|k|^2$ for $k$ near zero. 

Combining (\ref{muz0_1}), (\ref{muz0_2}), (\ref{muz0term1est}), (\ref{muz0term2est1}), (\ref{muz0term2est2}), (\ref{muxxxxxx}) and (\ref{muxxx}) yields 
$$
  |z||\mu^+(z,0)-1|\leq C, 
$$
which together with (\ref{supnormmu0}) gives $  |\mu^+(z,0)-1| \leq C\langle z\rangle^{-1}$.
\end{proof}

Definition \ref{def:condtype} of conductivity-type potentials includes a derivative condition. The following lemma will be used in Section \ref{sec:preserveconductivity} for proving such a condition for the evolved potential.

\begin{lemma}\label{lemma:muz0_der_estimate}
Let $\T^\pm:\C\ra\C$ satisfy 
$$
 |\T^{\pm}(k)|\leq C|k|^2 \mbox{ for small }|k|,\qquad
\frac{\T^{\pm}(k)}{k}\in \SI(\C),\qquad   \frac{\T^{\pm}(k)}{\overline{k}}\in \SI(\C).
$$
Fix $2<r<\infty$. For every $z\in\R^2$, let $\mu^\pm(z,k)$ be the unique solutions of the D-bar equations
(\ref{dbar_eq_p}) and (\ref{dbar_eq_m}) with the asymptotic condition $\mu^\pm(z,\cdot)-1\in L^{r}\cap L^{\infty}$. Then
\begin{equation}\label{derivlemma}
  |\nabla \mu^\pm(z,0)| \leq C\langle z\rangle^{-2} \qquad \mbox{ for all }z\in\R^2.
\end{equation}
\end{lemma}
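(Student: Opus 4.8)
The plan is to run the mechanism of Lemma~\ref{lemma:muz0_estimate} one derivative higher, i.e.\ to apply $\partial_z$ and $\partial_{\overline z}$ to the integral representation (\ref{def:muz0}) of $\mu^+(z,0)$. As in that proof I would treat $\mu^+$ only, and it suffices to bound $\partial_{\overline z}\mu^+(z,0)$ (the estimate for $\partial_z\mu^+$ is obtained in the same way, and $\mu^-$ follows via the symmetry (\ref{pm_psi})). Differentiating (\ref{def:muz0}) in $\overline z$ under the integral sign, the $\partial_{\overline z}$ either hits $e^+_{-z}(k)$ — bringing down a factor $-i\overline k$, which turns $\T^+(k)/|k|^2$ into the Schwartz symbol $-i\,\T^+(k)/k$ — or hits $\overline{\mu^+(z,k)}$, producing $\overline{\partial_z\mu^+(z,k)}$. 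Thus $\partial_{\overline z}\mu^+(z,0)=A+B$, where $A=\tfrac{-i}{4\pi^2}\int_\C \T^+(k)k^{-1}e^+_{-z}(k)\,\overline{\mu^+(z,k)}\,dk$ and $B=\tfrac{1}{4\pi^2}\int_\C \T^+(k)|k|^{-2}e^+_{-z}(k)\,\overline{\partial_z\mu^+(z,k)}\,dk$.

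For $A$, write $\overline{\mu^+}=1+\overline{(\mu^+-1)}$. The contribution of the~$1$ is, up to a constant and a linear change of variables, the Fourier transform of the Schwartz function $\T^+(k)/k$, hence $O(\langle z\rangle^{-N})$ for every $N$. For the contribution of $\overline{(\mu^+-1)}$ I would multiply by $z$, use $z\,e^+_{-z}(k)=i\,\partial_k e^+_{-z}(k)$, integrate by parts once in $k$ (no boundary term at infinity by rapid decay, and none at $k=0$ since $\T^+(k)/k\to0$), and then substitute the $\dbar$-equation (\ref{dbar_eq_p}) into $\partial_k\overline{(\mu^+-1)}=\overline{\dkb\mu^+}$. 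This leaves a ``good'' term $\int_\C\partial_k(\T^+/k)\,\overline{(\mu^+-1)}\,e^+_{-z}\,dk$ plus a term proportional to $\int_\C|\T^+(k)|^2 k^{-2}\,\mu^+(z,k)\,dk$.

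For $B$ I would again multiply by $z$ and integrate by parts once in $k$, now using the $z$-differentiated $\dbar$-equation, $\dkb(\partial_z\mu^+)=\tfrac{\T^+}{4\pi\overline k}\bigl(-ik\,e^+_{-z}\overline{\mu^+}+e^+_{-z}\,\overline{\partial_{\overline z}\mu^+}\bigr)$, together with $\partial_k(\T^+/|k|^2)=\overline k^{-1}\partial_k(\T^+/k)$ (the $\pi\delta_0$ term again vanishing because $\T^+/k$ vanishes at the origin). The outcome is a term proportional to $\int_\C\overline k^{-1}\partial_k(\T^+/k)\,\overline{\partial_z\mu^+}\,e^+_{-z}\,dk$, a term proportional to $\int_\C|\T^+(k)|^2|k|^{-2}k^{-1}\,\partial_{\overline z}\mu^+(z,k)\,dk$, and — the crux of the argument — a term proportional to $\int_\C|\T^+(k)|^2 k^{-2}\,\mu^+(z,k)\,dk$ which, after using $|e^+_{-z}(k)|=1$, \emph{exactly cancels} the like term from $A$. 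This cancellation is what makes the argument work: $\int_\C|\T^+|^2 k^{-2}\mu^+\,dk$ is merely bounded, not decaying, in $z$, so it must not survive.

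After the cancellation, $z\,\partial_{\overline z}\mu^+(z,0)$ is an $O(\langle z\rangle^{-N})$ term plus three integrals, each carrying a factor $\mu^+(z,k)-1$ or $\nabla_z\mu^+(z,k)$. I would estimate each by H\"older's inequality with exponents $r,r'$ ($1<r'<2$): the $k$-kernels $\partial_k(\T^+/k)$, $\overline k^{-1}\partial_k(\T^+/k)$ and $|\T^+|^2|k|^{-3}$ all lie in $L^{r'}(\C)$ — rapid decay at infinity, and near $k=0$ the singularities are at worst $|k|^{-1}$ (using $|\T^+(k)|\le C|k|^2$), which is $L^{r'}$-integrable on a disc since $r'<2$. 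Invoking from \cite[Lemma 3.3]{LMS} the bounds $\|\mu^+(z,\cdot)-1\|_{L^r(\C)}\le C\langle z\rangle^{-1}$ and $\|\nabla_z\mu^+(z,\cdot)\|_{L^r(\C)}\le C\langle z\rangle^{-1}$, this gives $|z|\,|\partial_{\overline z}\mu^+(z,0)|\le C\langle z\rangle^{-1}$; combined with the crude uniform bound $|\partial_{\overline z}\mu^+(z,0)|\le C$ (H\"older applied directly to $A$ and $B$ without the factor $z$), one obtains $|\partial_{\overline z}\mu^+(z,0)|\le C\langle z\rangle^{-2}$, and likewise for $\partial_z\mu^+$ and for $\mu^-$, which is (\ref{derivlemma}). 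I expect the two real difficulties to be (i) the legitimacy of differentiating both (\ref{def:muz0}) and the $\dbar$-equation in $z$, i.e.\ the $C^1$-dependence of $\mu^+$ on $z$ together with the $L^r$-decay of $\nabla_z\mu^+$, which I would import from \cite{LMS}, and (ii) spotting the exact cancellation of the two $\int|\T^+|^2 k^{-2}\mu^+\,dk$ terms, which is precisely what upgrades the naive $\langle z\rangle^{-1}$ decay to the required $\langle z\rangle^{-2}$.
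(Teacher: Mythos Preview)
Your proposal is correct and follows essentially the same route as the paper's proof: differentiate (\ref{def:muz0}), split off the Fourier transform of the Schwartz piece, multiply the remainder by $z$, integrate by parts in $k$, and exploit the exact cancellation of the two $\int_\C |\T^+(k)|^2 k^{-2}\mu^+(z,k)\,dk$ terms to leave only integrals controllable by H\"older and the $L^r$ decay estimates from \cite{LMS}. One minor point: the bound $\|\nabla_z\mu^+(z,\cdot)\|_{L^r(\C)}\le C\langle z\rangle^{-1}$ is \cite[Lemma~3.4]{LMS}, not Lemma~3.3, and the reduction to $\mu^-$ should be ``analogous'' rather than via (\ref{pm_psi}), which presupposes a real potential.
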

\begin{proof}
We prove estimate (\ref{derivlemma}) for $\mu^+$ only; the proof for $\mu^-$ is analogous.

We first prove the estimate $|\dzb \mu^+(z,0)| \leq C\langle z\rangle^{-2}$.
Differentiate (\ref{def:muz0}) to get
\begin{eqnarray}
  &&\nonumber
  \dzb\mu^+(z,0)\\
  &=& \nonumber
  \frac{1}{4\pi^2}\int_\C\left(-i\frac{\T^+(k)}{k}e^+_{-z}(k)\overline{\mu^+(z,k)}dk + \frac{\T^+(k)}{|k|^2}e^+_{-z}(k)\overline{\dz\mu^+(z,k)}\right)dk\\
  &=& \label{def:muz0derterm1}
  \frac{-i}{4\pi^2}\int_\C\frac{\T^+(k)}{k}e^+_{-z}(k)dk\\
  && \label{def:muz0derterm2}
  +\frac{1}{4\pi^2}\int_\C\Big(-i\frac{\T^+(k)}{k}e^+_{-z}(k)\overline{(\mu^+(z,k)-1)}dk \\
 &&\nonumber
  \qquad  \qquad  \qquad+ \frac{\T^+(k)}{|k|^2}e^+_{-z}(k)\overline{\dz\mu^+(z,k)}\Big)dk.
\end{eqnarray}
Now (\ref{def:muz0derterm1}) is rapidly decaying since it is the Fourier transform of a Schwartz function. Let us estimate (\ref{def:muz0derterm2}).
Integration by parts and applying equation (\ref{dbar_eq_p}) yields for $z\not=0$
\begin{eqnarray}
  && \label{continue_on}
  |z| \left|-i\int_{\C} \frac{\T^+(k)}{k} e^+_{-z}(k) (\overline{\mu^+(z,k)-1}) dk+
  \int_{\C}\frac{\T^+(k)}{|k|^2}e^+_{-z}(k)\overline{\dz\mu^+(z,k)}dk \right|\\
  &=& \nonumber
  \Big|-iz \int_{\C} \frac{\T^+(k)}{k} e^+_{-z}(k) (\overline{\mu^+(z,k)-1}) dk
  +i(-iz)\int_{\C}\frac{\T^+(k)}{|k|^2}e^+_{-z}(k)\overline{\dz\mu^+(z,k)}dk\Big|\\
  &=& \nonumber
  \left|\int_{\C} \frac{\T^+(k)}{k} \frac{\partial e^+_{-z}(k)}{\partial k}(\overline{\mu^+(z,k)-1})dk+
  i\int_{\C}\frac{\T^+(k)}{|k|^2}\frac{\partial e^+_{-z}(k)}{\partial k} \overline{\dz\mu^+(z,k)}dk\right|\\
  &=& \nonumber
  \Big|-\int_{\C}e^+_{-z}(k)\left[ \Big(\frac{\partial}{\partial k} \Big(\frac{\T^+(k)}{k}\Big)\Big)(\overline{\mu^+(z,k)-1})+\frac{\T^+(k)}{k}  \overline{\left(\frac{\partial}{\partial\overline{k}}\mu^+(z,k)\right)}\right] dk\\
  && \nonumber
  -\, i \int_{\C}  e^+_{-z}(k)\left[ \overline{\dz\mu^+(z,k)}\frac{\partial}{\partial k}\Big(\frac{\T^+(k)}{|k|^2}  \Big)
+\frac{\T^+(k)}{|k|^2}  \dzb\overline{\left(\frac{\partial}{\partial\overline{k}}\mu^+(z,k)\right)}\right]dk \Big|.
\end{eqnarray}
From equation (\ref{dbar_eq_p}) we get $\overline{\left(\frac{\partial}{\partial\overline{k}}\mu^+(z,k)\right)}=\frac{\overline{\T^+(k)}}{4\pi k} e^+_{z}(k)\mu^+(z,k)$ and 
\begin{eqnarray*}
 \dzb\overline{\left(\frac{\partial}{\partial\overline{k}}\mu^+(z,k)\right)} 
  &=&
  \dzb\big(\frac{\overline{\T^+(k)}}{4\pi k} e^+_{z}(k)\mu^+(z,k)\big) \\
  &=&
  i\overline{k}\,\frac{\overline{\T^+(k)}}{4\pi k} e^+_{z}(k)\mu^+(z,k) + 
  \frac{\overline{\T^+(k)}}{4\pi k} e^+_{z}(k)\dzb\mu^+(z,k).
\end{eqnarray*}
Hence, after cancellation of two terms containing $\frac{\overline{\T^+(k)}}{4\pi k} e^+_{z}(k)\mu^+(z,k)$, we see that (\ref{continue_on}) is bounded by
\begin{eqnarray}
 && \label{intterm1}
  \left|\int_{\C} e^+_{-z}(k)\Big(\frac{\partial}{\partial k} \Big(\frac{\T^+(k)}{k}\Big)\Big)(\overline{\mu^+(z,k)-1})dk\right| \\
  && \label{intterm3}
  + \left|\int_{\C} e^+_{-z}(k)\Big(\frac{\partial}{\partial k}\Big(\frac{\phantom{.}1\phantom{.}}{\overline{k}}\frac{\T^+(k)}{k}\Big)\Big) \overline{\dz\mu^+(z,k)} dk\right|\\
  &&\label{intterm4}
  + \left|\int_{\C}\frac{|\T^+(k)|^2}{4\pi k|k|^2}\,\dzb \mu^+(z,k)dk\right|.
\end{eqnarray}
We next estimate each of the terms (\ref{intterm1}),(\ref{intterm3}) and (\ref{intterm4}) separately.

H\"older's inequality and \cite[Lemma 3.3]{LMS} show that term (\ref{intterm1}) is bounded by
\begin{equation}\label{inttermEst1}
  \left\|\frac{\partial}{\partial k} \Big(\frac{\T^+(k)}{k}\Big)\right\|_{L^{r^\prime}(\C)}\|\mu^+(z,k)-1\|_{L^{r}(\C)}
  \leq
  C^\prime\langle z\rangle^{-1},
\end{equation}
where the $L^{r^\prime}$ norm is finite since by assumption $\T^+(k)/k\in {\mathcal S}(\C)$.
Term (\ref{intterm3}) can be estimated using H\"older's inequality and \cite[Lemma 3.4]{LMS}:
\begin{eqnarray}
  && \nonumber
  \left|\int_{\C} e^+_{-z}(k)\Big(\frac{\partial}{\partial k}\Big(\frac{\phantom{.}1\phantom{.}}{\overline{k}}\frac{\T^+(k)}{k}\Big)\Big) \overline{\dz\mu^+(z,k)} dk\right|\\
  &=&\nonumber
  \left|\pi\int_{\C} \delta_0(k)\frac{\T^+(k)}{k}e^+_{-z}(k) \overline{\dz\mu^+(z,k)} dk\right.\\
  &&\nonumber
  +\left.\int_{\C} \frac{e^+_{-z}(k)}{\overline{k}}\Big(\frac{\partial}{\partial k}\Big(\frac{\T^+(k)}{k}\Big)\Big) \overline{\dz\mu^+(z,k)} dk\right|\\
  &\leq&\label{inttermEst3}
  \left\|\frac{\phantom{.}1\phantom{.}}{\overline{k}}\frac{\partial}{\partial k}\Big(\frac{\T^+(k)}{k}\Big)\right\|_{L^{r^\prime}(\C)}
  \left\|\dz\mu^+(z,k)\right\|_{L^{r}(\C)}
  \leq
  C^\prime \langle z\rangle^{-1},
\end{eqnarray}
where the term containing $\delta_0(k)$ vanishes by the assumption since $\lim_{k\ra 0}\T^+(k)/k=0$. The finiteness of the $L^{r^\prime}$ norm in (\ref{inttermEst3}) is seen as in (\ref{muxxxxxx}).
Similarly, (\ref{intterm4}) is bounded by
\begin{eqnarray}
  \left|\int_{\C} \frac{|\T^+(k)|^2}{4\pi k|k|^2}\dzb\mu^+(z,k) dk\right|
    &\leq& \nonumber
    \frac{1}{4\pi}\left\|\frac{\phantom{.}1\phantom{.}}{k}\Big(\frac{\T^+(k)}{k}\Big)^2\right\|_{L^{r^\prime}(\C)}
  \left\|\dzb\mu^+(z,k)\right\|_{L^{r}(\C)}\\
    &\leq& \label{inttermEst4}
  C^\prime \langle z\rangle^{-1}.
\end{eqnarray}

Combining (\ref{inttermEst1}), (\ref{inttermEst3}) and (\ref{inttermEst4}) with (\ref{continue_on}) shows that (\ref{def:muz0derterm2}) is bounded by $C\langle z\rangle^{-2}$. Thus we may conclude that $|\dzb \mu^+(z,0)| \leq C\langle z\rangle^{-2}$. The proof for $|\dz \mu^+(z,0)| \leq C\langle z\rangle^{-2}$ is analogous and uses the assumption $\T^+(k)/\overline{k}\in \SI(\C)$. Together these two estimates yield (\ref{derivlemma}).
\end{proof}

\section{Radially symmetric initial data}\label{sec:radial}

\begin{theorem}
Let $q_0\in C^\infty_0(\R^2)$ be real-valued and of conductivity type in the sense of Definition \ref{def:condtype} with $\gamma\equiv 1$ outside the support of $q_0$. Furthermore, assume that $q_0$ is rotationally symmetric: $q_0(z)=q_0(|z|)$ for all $z\in\R^2$. 

Then $\qis_\tau$ defined by (\ref{qisdef}) is real-valued for all $\tau\geq 0$.
\end{theorem}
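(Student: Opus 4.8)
The plan is to work entirely on the scattering side, turning the reality and the rotational symmetry of $q_0$ into symmetries of its scattering transform, and then reading off reality of $\qis_\tau$ from those symmetries through the inverse map $\QQp$. First I would record the symmetries of $\T^+_0:=\TT^+(q_0)$. Since $q_0$ is conductivity type it has no exceptional points and $\TT^+(q_0)$ is well defined, with $\T^+_0(k)/k,\ \T^+_0(k)/\overline k\in\SI(\C)$ by \cite{nachman,LMS}, so the CGO solutions may be manipulated freely. Conjugating the Schr\"odinger equation and the normalization of $\psi^+$ and invoking uniqueness gives $\overline{\psi^+(z,k)}=\psi^-(z,-\overline k)$ and hence $\overline{\T^+_0(k)}=\T^-_0(-\overline k)$, as in (\ref{pm_tksym}). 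Rotational symmetry: if $q_0(e^{i\theta}z)=q_0(z)$ then $z\mapsto\psi^+(e^{i\theta}z,k)$ solves the same equation with asymptotics $e^{i(ke^{i\theta})z}$, so by uniqueness $\mu^+(e^{i\theta}z,k)=\mu^+(z,ke^{i\theta})$; inserting this in (\ref{tdef}) and substituting $z\mapsto e^{-i\theta}z$ shows $\T^+_0(ke^{i\theta})=\T^+_0(k)$ for all $\theta$, i.e.\ $\T^+_0$ is radial. Reflection: $z\mapsto\overline z$ is an isometry and $q_0(\overline z)=q_0(z)$, so $\psi^-(\overline z,k)$ solves the $+$-equation, whence by uniqueness $\mu^-(\overline z,k)=\mu^+(z,k)$; inserting this in (\ref{tdefm}) and substituting $z\mapsto\overline z$ gives $\T^-_0=\T^+_0$. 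Combining, $\overline{\T^+_0(k)}=\T^-_0(-\overline k)=\T^+_0(-\overline k)=\T^+_0(|k|)=\T^+_0(k)$, so $\T^+_0$ is real-valued and radial.

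Now let $\s:=e^{i\tau(k^3+\overline k^3)}\T^+_0$ be the function to which $\QQp$ is applied in (\ref{qisdef}). The multiplier $e^{i\tau(k^3+\overline k^3)}$ has modulus one (its exponent is real) and is invariant under $k\mapsto\overline k$ while it is conjugated under $k\mapsto-\overline k$; together with $\T^+_0$ real and radial this yields $\overline{\s(k)}=\s(-\overline k)$ and $\s(\overline k)=\s(k)$. Moreover $\s(k)/k$ and $\s(k)/\overline k$ are still in $\SI(\C)$ — multiplying a Schwartz function by $e^{i\tau(k^3+\overline k^3)}$, whose derivatives grow only polynomially, keeps it Schwartz — so by \cite{LMS} the map $\QQp$ is defined on $\s$ and $\qis_\tau=\QQp\s$ is a well-defined continuous function. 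The proof then reduces to two transfer statements for the inverse map: (I) if $\mathbf r$ satisfies $\mathbf r(k)/k,\mathbf r(k)/\overline k\in\SI(\C)$ and $\overline{\mathbf r(k)}=\mathbf r(-\overline k)$, then $(\QQp\mathbf r)(\overline z)=\overline{(\QQp\mathbf r)(z)}$; (II) if in addition $\mathbf r(\overline k)=\mathbf r(k)$, then $(\QQp\mathbf r)(\overline z)=(\QQp\mathbf r)(z)$. Applying (I) and (II) with $\mathbf r=\s$ gives $\overline{\qis_\tau(z)}=\qis_\tau(\overline z)=\qis_\tau(z)$, so $\qis_\tau$ is real-valued for every $\tau\ge 0$.

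For (I) I would first establish the CGO symmetry $\overline{\mu^+(z,k)}=\mu^+(\overline z,-\overline k)$, where $\mu^+$ solves the $\dbar$ equation (\ref{dbar_eq_p}) with datum $\mathbf r$ and normalization $\mu^+(z,\cdot)-1\in L^\infty\cap L^r$: setting $\nu(z,k):=\overline{\mu^+(\overline z,-\overline k)}$, a Wirtinger-calculus computation using (\ref{dbar_eq_p}), the consequence $\overline{\mathbf r(-\overline k)}=\mathbf r(k)$ of the hypothesis, and the fact that $z\mapsto\overline z$ turns $e^{-i(kz+\overline k\overline z)}$ into $e^{-i(k\overline z+\overline k z)}$ — the exponential of the $\dbar$ equation for $\mu^+(\overline z,\cdot)$ — shows that $\nu$ again solves (\ref{dbar_eq_p}) with datum $\mathbf r$ and $\nu(z,\cdot)-1\in L^\infty\cap L^r$; uniqueness then gives $\nu=\mu^+$. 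Substituting $\psi^+(z,k)=e^{ikz}\mu^+(z,k)$ and this symmetry into the definition (\ref{def:QQp}) of $\QQp$, changing variables $k\mapsto-\overline k$ (Lebesgue measure is invariant), and using $\overline{\dzb F}=\dz\overline F$ together with $\dz[F(\overline z)]=(\dzb F)(\overline z)$, one finds that the integral defining $(\QQp\mathbf r)(z)$ becomes the complex conjugate of the integral defining $(\QQp\mathbf r)(\overline z)$, which is (I). The interchanges of conjugation, change of variables and the $\dzb$-differentiation under the integral are justified by the decay and smoothness from $\mathbf r(k)/k,\mathbf r(k)/\overline k\in\SI(\C)$ together with the pointwise bounds on $\mu^+$ and $\nabla_z\mu^+$ from Section~\ref{sec:CGOprops} (Lemmas~\ref{lemma:muz0_estimate}--\ref{lemma:muz0_der_estimate}) and \cite{LMS}.

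Statement (II) is proved by the analogous scheme, now starting from the reflection symmetry $\mathbf r(\overline k)=\mathbf r(k)$: derive the companion CGO identity relating $\mu^+(\cdot,k)$ to its $k$-reflected counterpart, substitute into (\ref{def:QQp}), and change variables $k\mapsto\overline k$. I expect this to be the real work, and the main obstacle. Unlike the maps $k\mapsto e^{i\theta}k$ and $k\mapsto-\overline k$ used in (I), the reflection $k\mapsto\overline k$ is orientation-reversing, so it does not preserve the $\dbar_k$-structure of (\ref{dbar_eq_p}) on its own; one must couple it with a conjugation and with a reflection in $z$, and then carefully track the sign of the exponential factor $e^{-i(kz+\overline k\overline z)}$ so that the resulting $\mu^+$-identity again lands on a genuine solution of a $\dbar$ equation to which uniqueness applies. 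The remaining ingredients — uniqueness for the $\dbar$ equations (\ref{dbar_eq_p})--(\ref{dbar_eq_m}) with the $L^\infty\cap L^r$ normalization, legitimacy of differentiating under the integral in (\ref{def:QQp}), and preservation of the Schwartz conditions under the evolution multiplier — are routine given Sections~\ref{sec:ISM}--\ref{sec:CGOprops} and \cite{LMS,nachman}, but all must be invoked to make the formal symmetry computations rigorous.
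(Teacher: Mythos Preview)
Your argument is sound through statement (I): the symmetries of $\T^+_0$ (radial and real), the derived relation $\overline{\s(k)}=\s(-\overline k)$, and the CGO identity $\overline{\mu^+(z,k)}=\mu^+(\overline z,-\overline k)$ all check out and yield $q_\tau(\overline z)=\overline{q_\tau(z)}$, which is exactly what the paper obtains (it routes through $\QQ^-$ and $\T^+_\tau=\T^-_\tau$, but the content is the same). The gap is (II). The symmetry $\s(\overline k)=\s(k)$ does not produce a CGO identity via uniqueness for (\ref{dbar_eq_p}), no matter how you couple it with conjugation and a reflection in $z$. Under $k\mapsto\overline k$ the operator $\partial_{\overline k}$ becomes $\partial_k$, so you must conjugate $\mu$; but conjugation also flips the sign in the real exponent of $e^{-i(kz+\overline k\overline z)}$ and replaces $\s$ by $\overline{\s}$, and no $z$-reflection cancels both simultaneously. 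Every candidate $\nu\in\{\overline{\mu^+(\overline z,\overline k)},\,\overline{\mu^+(-\overline z,\overline k)},\,\mu^+(\overline z,\overline k),\dots\}$ lands on an equation with either the wrong exponential sign or with coefficient $\overline{\s(k)}\neq\s(k)$. (Incidentally, $k\mapsto-\overline k$ in (I) is also orientation-reversing; (I) works because $k\mapsto-\overline k$ together with $z\mapsto\overline z$ sends $e_{-z}(k)$ to $\overline{e_{-z}(k)}$, which conjugation restores; the analogous bookkeeping for $k\mapsto\overline k$ fails.) Note too that your two symmetries of $\s$ hold for any multiplier $e^{i\tau P(k)}$ with $P$ real, odd, and $P(\overline k)=P(k)$ --- they never see the cube.

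The paper's second symmetry is different and hinges precisely on that cubic exponent: since $(e^{\pm2\pi i/3})^3=1$ and $\T^+_0$ is radial, one has the three-fold invariance $\s(e^{2\pi i/3}k)=\s(k)$. Rotation is compatible with $\partial_{\overline k}$, so uniqueness for (\ref{dbar_eq_p}) gives directly $\mu^+_\tau(z,k)=\mu^+_\tau(e^{2\pi i/3}z,\,e^{-2\pi i/3}k)$, and substitution into (\ref{def:QQp}) yields the spatial three-fold symmetry $q_\tau(e^{2\pi i/3}z)=q_\tau(z)$. The paper then combines this with (I) to argue $\mathrm{Im}\,q_\tau\equiv 0$. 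This three-fold rotational symmetry is the key idea your proposal is missing; it is what replaces the unworkable (II).
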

\begin{proof}
Theorem 3.3 of \cite{SMI2000} implies that $\T^+_0$ is rotationally symmetric and real-valued:
\begin{equation}\label{t0rotreal}
  \T^+_0(k) = \T^+_0(|k|),\qquad \overline{\T^+_0(k)}=\T^+_0(k).
\end{equation}
The proof of (\ref{t0rotreal}) is based on first using uniqueness of solutions to the Schr\"odinger equation (\ref{Schrode}) with the asymptotic condition (\ref{exp_asymp}) to show
\begin{eqnarray}
  \label{vaieq1}
  \mu^+_0(z,k) &=& \mu^+_0(e^{i\fii}z,e^{-i\fii}k), \\  
  \label{vaieq2}
  \mu^+_0(z,k) &=&  \overline{\mu^+_0(-\overline{z},\overline{k})},
\end{eqnarray}
for all $z\in\R^2$ and $k\in\C$ and $\fii\in\R$. Then $\T(k)=\T(e^{i\fii}k)$ and $\T(k)=\overline{\T(\overline{k})}$ by substituting (\ref{vaieq1}) and  (\ref{vaieq2}) to formula (\ref{tdef}), and (\ref{t0rotreal}) follows.

From the real-valuedness of $q_0$ and formula (\ref{pm_psi}) we know that
\begin{equation}\label{mu0relation}
  \overline{\mu^+_0(z,k)} = \mu^-_0(z,-\overline{k}),
\end{equation}
and we can calculate
\begin{eqnarray*}
  \T^-_0(-\overline{k})
  &=&
  \int_{\R^2} e^{-i((-\overline{k})\overline{z}+\overline{(-\overline{k})}z)} q_0(z)\overline{\mu^-_0(z,-\overline{k})}dz\\
  &=&
  \int_{\R^2} e^{i(\overline{k}\overline{z}+kz)} q_0(z)\overline{\overline{\mu^+_0(z,k)}}dz\\
  &=&
  \overline{\int_{\R^2} e^{-i(\overline{k}\overline{z}+kz)} q_0(z)\overline{\mu^+_0(z,k)}dz}\\
  &=& 
 \overline{\T^+_0(k)}.
\end{eqnarray*}
Applying (\ref{t0rotreal}) yields $\overline{\T^+_0(k)}=\T^+_0(k)=\T^+_0(-\overline{k})$, so we have
\begin{equation}\label{t0pm_identity}
  \T^+_0(k) = \T^-_0(k).
\end{equation}

Evolution of scattering data is defined by the same formula for $\T^+_\tau$ and $\T^-_\tau$:
\begin{eqnarray*}
  \T^+_\tau(k) &=&  e^{i\tau(k^3+\overline{k}^3)}\T^+_0(k),\\
  \T^-_\tau(k) &=&  e^{i\tau(k^3+\overline{k}^3)}\T^-_0(k),
\end{eqnarray*}
so we may use (\ref{t0pm_identity}) to conclude that $\T^+_\tau(k) = \T^-_\tau(k)$ for all $k\in\C$ and $\tau\geq 0$. In the rest of this section we denote 
\begin{equation}\label{t_taupm_identity}
  \T_\tau(k) := \T^+_\tau(k) = \T^-_\tau(k).
\end{equation}
Now (\ref{t_taupm_identity}) is a remarkable identity: we can write the (integral form) D-bar equations
\begin{eqnarray*}
  \mu^+_\tau(z,k) 
  &=&  
  1-\frac{1}{(4\pi)^2}\int_\C \frac{\T_\tau(\kp)}{\overline{\kp}(k-\kp)}e^{-i(\kp z+\overline{\kp}\overline{z})}\overline{\mu^+_\tau(z,\kp)}d\kp,\\
  \mu^-_\tau(z,k) 
  &=&  
  1-\frac{1}{(4\pi)^2}\int_\C \frac{\T_\tau(\kp)}{\overline{\kp}(k-\kp)}e^{-i(\kp\overline{z}+\overline{\kp}z)}\overline{\mu^-_\tau(z,\kp)}d\kp, 
\end{eqnarray*}
 and replacing $z$ by $\overline{z}$ in the latter D-bar equation gives the relation
\begin{equation}\label{mu_taupm_identity}
  \mu^+_\tau(z,k) = \mu^-_\tau(\overline{z},k),
\end{equation}
which, substituted into (\ref{def:QQm}), yields the identity
\begin{equation}\label{Q_taupm_identity}
  (\QQ^+ \T_\tau) (z) =  (\QQ^- \T_\tau) (\overline{z})
\end{equation}
for all $z\in\R^2$.

Our goal is to derive an equation connecting $ (\QQ^+ \T_\tau) (z)$ and $ (\QQ^+ \T_\tau) (\overline{z})$. For this we calculate 
\begin{eqnarray*}
  \overline{\mu^+_\tau(z,-\overline{k})} 
  &=&  
  1-\frac{1}{(4\pi)^2}\overline{\int_\C \frac{\T_\tau(\kp)}{\overline{\kp}(-\overline{k}-\kp)}
  e^{-i(\kp z+\overline{\kp}\overline{z})}\overline{\mu^+_\tau(z,\kp)}d\kp},\\
  &=&  
  1-\frac{1}{(4\pi)^2}\int_\C \frac{\T_0(\kp)e^{-i\tau((\kp)^3+(\overline{\kp})^3)}}{\kp(-k-\overline{\kp})}
  e^{i(\kp z+\overline{\kp}\overline{z})}\overline{\overline{\mu^+_\tau(z,\kp)}}d\kp,\\
  &=&  
  1-\frac{1}{(4\pi)^2}\int_\C \frac{\T_0(-\overline{\kp})e^{i\tau((\kp)^3+(\overline{\kp})^3)}}{-\overline{\kp}(-k+\kp)}
  e^{i((-\overline{\kp}) z-\kp\overline{z})}\overline{\overline{\mu^+_\tau(z,-\overline{\kp})}}d\kp,\\
  &=&  
  1-\frac{1}{(4\pi)^2}\int_\C \frac{\T_\tau(\kp)}{\overline{\kp}(k-\kp)}
  e^{-i((\overline{\kp}) z+\kp\overline{z})}\overline{\overline{\mu^+_\tau(z,-\overline{\kp})}}d\kp,
\end{eqnarray*}
so by the uniqueness of solutions to the D-bar equation we have
\begin{equation}\label{mupmtausym}
  \mu^-_\tau(z,k)=\overline{\mu^+_\tau(z,-\overline{k})}.
\end{equation}
Substituting (\ref{mupmtausym}) to the definition of $\QQ^+$ and using (\ref{t0rotreal}) yields
\begin{eqnarray}
  \overline{(\QQ^+\T_\tau)(z)}
  &=&   \nonumber
  \frac{-i}{\pi^2}\overline{\left(\dzb\int_\C \frac{\T_\tau(\kp)}{\overline{\kp}}
  e^{-i(\kp z+\overline{\kp}\overline{z})}\overline{\mu^+_\tau(z,\kp)}d\kp \right)}\\
  &=&   \nonumber
  \frac{-i}{\pi^2}\,\dz\int_\C \frac{\T_0(\kp)e^{-i\tau((\kp)^3+(\overline{\kp})^3)}}{\kp}
  e^{i(\kp z+\overline{\kp}\overline{z})}\overline{\overline{\mu^+_\tau(z,\kp)}}d\kp \\
  &=&   \nonumber
  \frac{-i}{\pi^2}\,\dz\int_\C \frac{\T_0(-\overline{\kp})e^{i\tau((\kp)^3+(\overline{\kp})^3)}}{-\overline{\kp}}
  e^{-i(\overline{\kp} z+\kp\overline{z})}\overline{\overline{\mu^+_\tau(z,-\overline{\kp})}}d\kp \\
  &=&   \nonumber
  \frac{i}{\pi^2}\,\dz\int_\C \frac{\T_\tau(\kp)}{\overline{\kp}}
  e^{-i(\overline{\kp} z+\kp\overline{z})}\overline{\mu^-_\tau(z,\kp)}d\kp \\
  &=& \label{QQQpmeq}
  (\QQ^-\T_\tau)(z).
\end{eqnarray}
Now a combination of (\ref{QQQpmeq}) and (\ref{Q_taupm_identity}) yields
$$
  (\QQ^+ \T_\tau) (z) =  \overline{(\QQ^+ \T_\tau) (\overline{z})}.
$$

Next we make use of the cubes appearing in the multiplier $\exp(i\tau(k^3+\overline{k}^3))$ of the evolving scattering data. Define $\fii=2\pi/3$ and note that $\exp(\pm i\fii)^3=1$. Denote the rotation of a complex number $z$ by angle $\fii$ by $z_\fii:=e^{i\fii}z$. The scattering data $\T_\tau$ has the following three-fold symmetry:
\begin{equation}\label{Tthreefoldsym}
  \T_ \tau(k_{\pm\fii}) = e^{i\tau((k_{\pm\fii})^3 + \overline{(k_{\pm\fii})}^3)}\T_0(k_{\pm\fii})
  = e^{i\tau(k^3+\overline{k}^3)}\T_0(k) = \T_\tau(k),
\end{equation}
where we used the rotational symmetry (\ref{t0rotreal}). Now we can compute
\begin{eqnarray*}
  \mu^+_\tau(z_\fii,k_{-\fii}) 
  &=&  
  1-\frac{1}{(4\pi)^2}\int_\C \frac{\T_\tau(\kp)}{\overline{\kp}(k_{-\fii}-\kp)}
  e^{-i(\kp z_\fii+\overline{\kp}\overline{z_\fii})}\overline{\mu^+_\tau(z_\fii,\kp)}d\kp,\\
  &=&  
  1-\frac{1}{(4\pi)^2}\int_\C \frac{\T_\tau(\kp_{-\fii})}{\overline{\kp_{-\fii}}(k_{-\fii}-\kp_{-\fii})}
  e^{-i(\kp_{-\fii} z_\fii+\overline{\kp_{-\fii}}\overline{z_\fii})}\overline{\mu^+_\tau(z_\fii,\kp_{-\fii})}d\kp,\\
  &=&  
  1-\frac{1}{(4\pi)^2}\int_\C \frac{\T_\tau(\kp)}{\overline{\kp}(k-\kp)}
  e^{-i(\kp z+\overline{\kp}\overline{z})}\overline{\mu^+_\tau(z_\fii,\kp_{-\fii})}d\kp,
\end{eqnarray*}
which by the uniqueness of the solution to the D-bar equation shows that $\mu^+_\tau(z,k)=\mu^+_\tau(z_\fii,k_{-\fii})$. The same argument works with $\fii$ replaced by $-\fii$, so we have the symmetry relation
\begin{equation}\label{muthreefoldsym}
  \mu^+_\tau(z,k)=\mu^+_\tau(z_{\pm\fii},k_{\mp\fii})
\end{equation}
for all $z\in\R^2$ and $k\in\C$. Denote the coordinate transformation of rotation by angle $\fii$ by $F_\fii(z):=z_\fii=e^{i\fii}z$. Then in the complex chain rule 
$$
  \dzb(f\circ F_\fii) = ((\dz f)\circ F_\fii)\cdot\dzb F_\fii + ((\dzb f)\circ F_\fii)\cdot \overline{\dz F_\fii}
$$
we have $\dzb F_\fii = \dzb (e^{i\fii}z)=0$ and $\dz F_\fii = \dz (e^{i\fii}z)=e^{i\fii}$, so 
\begin{equation}\label{chainrule!}
  ((\dzb f)\circ F_\fii) = e^{i\fii}\cdot \dzb(f\circ F_\fii).
\end{equation}
Applying (\ref{chainrule!}) to  the definition of $\QQ^+\T_\tau$ and using (\ref{muthreefoldsym}) and (\ref{Tthreefoldsym}) gives
\begin{eqnarray}
  (\QQ^+\T_\tau)(z_\fii)
  &=&   \nonumber
  \left(\dzb\Big(\frac{i}{\pi^2}\int_\C \frac{\T_\tau(\kp)}{\overline{\kp}}
  e^{-i(\kp z+\overline{\kp}\overline{z})}\overline{\mu^+_\tau(z,\kp)}d\kp \Big)\right)\circ F_\fii\\
  &=&   \nonumber
  e^{i\fii}\left(\frac{i}{\pi^2}\,\dzb\int_\C \frac{\T_\tau(\kp)}{\overline{\kp}}
  e^{-i(\kp z_\fii+\overline{\kp}\overline{z_\fii})}\overline{\mu^+_\tau(z_\fii,\kp)}d\kp\right) \\
  &=&   \nonumber
  \frac{ie^{i\fii}}{\pi^2}\,\dzb\int_\C \frac{\T_\tau(\kp)}{\overline{\kp}}
  e^{-i(\kp_\fii z+\overline{\kp_\fii}\overline{z})}\overline{\mu^+_\tau(z,\kp_\fii)}d\kp\\
  &=&   \nonumber
  \frac{ie^{i\fii}}{\pi^2}\,\dzb\int_\C \frac{\T_\tau(\kp_{-\fii})}{\overline{\kp_{-\fii}}}
  e^{-i(\kp z+\overline{\kp}\overline{z})}\overline{\mu^+_\tau(z,\kp)}d\kp\\
  &=&   \nonumber
  \frac{i}{\pi^2}\,\dzb\int_\C \frac{\T_\tau(\kp)}{\overline{\kp}}
  e^{-i(\kp z+\overline{\kp}\overline{z})}\overline{\mu^+_\tau(z,\kp)}d\kp\\
  &=& \label{QQQcube_eq}
  (\QQ^+\T_\tau)(z).
\end{eqnarray}

Now the combination of (\ref{QQQcube_eq}) and (\ref{Q_taupm_identity}) tells us that $\qis_\tau$ has two symmetries, three-fold and reflectional:
$$
  \qis_\tau(z) = \qis_\tau(z_\fii) = \qis_\tau(z_{-\fii}), \qquad \qis_\tau(z) = \overline{\qis_\tau(\overline{z})}.
$$
This implies the following for the real part:
\begin{equation}\label{Reqissym}
  \mbox{Re}\,\qis_\tau(z) = \mbox{Re}\,\qis_\tau(z_\fii) = \mbox{Re}\,\qis_\tau(z_{-\fii}), \qquad \mbox{Re}\,\qis_\tau(z) = \mbox{Re}\,\qis_\tau(\overline{z}).
\end{equation}
More importantly, we see that the imaginary part of $\qis_\tau$ satisfies
$$
  \mbox{Im}\,\qis_\tau(z) = \mbox{Im}\,\qis_\tau(z_\fii), \qquad \mbox{Im}\,\qis_\tau(z) = -\mbox{Im}\,\qis_\tau(\overline{z}),
$$
implying that 
$
  \mbox{Im}\,\qis_\tau(z)\equiv 0.
$

\end{proof}

\section{Preservation of conductivity type}\label{sec:preserveconductivity}

\noindent
We study the properties of the inverse scattering evolution $\qis_\tau$. We do not assume the symmetry $q_0(z)=q_0(|z|)$ in this section; instead, we just assume that $\qis_\tau$ stays real-valued for positive times $\tau>0$. 

We start by deriving partial differential equations connecting $\qis_\tau$ with the the solutions of the D-bar equations (\ref{dbar_eq_p}) and  (\ref{dbar_eq_m}).

\begin{lemma}\label{lemma:qdef}
Let $\T^\pm:\C\ra\C$ satisfy 
$$
\frac{\T^{\pm}(k)}{k}\in \SI(\C),\qquad   \frac{\T^{\pm}(k)}{\overline{k}}\in \SI(\C).
$$
Fix $2<r<\infty$. For every $z\in\R^2$, let $\mu^\pm(z,k)$ be the unique solutions of  equations
(\ref{dbar_eq_p}) and (\ref{dbar_eq_m}) with the asymptotic condition $\mu^\pm(z,\cdot)-1\in L^{r}\cap L^{\infty}(\C)$. 
Assume that $(\QQp\T^+)(z)$ and $(\QQm\T^-)(z)$, defined by (\ref{def:QQp}) and (\ref{def:QQm}), are real-valued functions.  

Then for any fixed $k\in\C\setminus 0$ we have
\begin{eqnarray} 
\label{dbx_p}
    (-\Delta - 4ik\dzb + \QQp\T^+)\mu^+(\,\cdot\,,k) &=& 0,\\
\label{dbx_m}
    (-\Delta - 4ik\dz + \QQm\T^-)\mu^-(\,\cdot\,,k) &=& 0.
 \end{eqnarray}
\end{lemma}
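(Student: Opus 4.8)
The plan is to establish (\ref{dbx_p}) by differentiating the $\dbar$-equation (\ref{dbar_eq_p}) in the spatial variable $z$ and recognizing the resulting expression as the $\dbar_k$-derivative (with respect to $k$) of the left-hand side of (\ref{dbx_p}); then a Liouville-type uniqueness argument for $\dbar_k$ forces the left-hand side to vanish. The statement for $\mu^-$ is entirely analogous, replacing $\dzb$ by $\dz$ and using (\ref{dbar_eq_m}), so I would only treat $\mu^+$ in detail.

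First I would introduce the shorthand $e^+_{-z}(k)=\exp(-i(kz+\overline k\overline z))$ as in Section~\ref{sec:CGOprops}, and record the two basic differentiation identities $\dzb e^+_{-z}(k) = -ik\, e^+_{-z}(k)$ and $\dz e^+_{-z}(k) = -i\overline k\, e^+_{-z}(k)$. Applying $\dzb$ and then $-\Delta - 4ik\dzb$ to equation (\ref{dbar_eq_p}), and interchanging $\dbar_k$ with the $z$-derivatives (justified by the decay and smoothness hypotheses on $\T^\pm(k)/k$ and $\T^\pm(k)/\overline k$, together with the regularity of $\mu^\pm(z,\cdot)$ from \cite[Lemma 3.3, 3.4]{LMS}), one computes that
\begin{equation*}
  \dbar_k\Big[(-\Delta - 4ik\dzb)\mu^+(z,k)\Big]
  = -(\QQp\T^+)(z)\cdot \dbar_k\mu^+(z,k),
\end{equation*}
where the right-hand side is obtained by recognizing the combination of terms produced as exactly $(\QQp\T^+)(z)$ times the $\dbar$-equation again; here the definition (\ref{def:QQp}) of $\QQp\T^+$ enters, and the hypothesis that $(\QQp\T^+)(z)$ is \emph{real-valued} is what allows the conjugates appearing in (\ref{dbar_eq_p}) and (\ref{def:QQp}) to be matched up consistently. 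Equivalently, writing $w(z,k) := (-\Delta - 4ik\dzb + \QQp\T^+)\mu^+(z,k)$, the computation shows $\dbar_k w(z,k) = 0$ for $k\in\C\setminus 0$.

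Next I would pin down the behaviour of $w(z,\cdot)$ as $|k|\ra\infty$ and near $k=0$: since $\mu^+(z,\cdot)-1\in L^r\cap L^\infty$ and (by differentiating the integral equation) its $k$-derivatives decay, one gets $w(z,k)\ra 0$ as $|k|\ra\infty$; combined with $\dbar_k w = 0$ away from the origin and a check that $w$ has at worst a removable/mild singularity at $k=0$ (controlled because $\T^+(k)/\overline k$ is Schwartz), Liouville's theorem for the $\dbar$-operator forces $w(z,k)\equiv 0$. This yields (\ref{dbx_p}), and the parallel argument gives (\ref{dbx_m}).

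The main obstacle I anticipate is the bookkeeping in the middle step: correctly commuting $\dbar_k$ past $\Delta$ and $\dzb$ acting on the integral representation of $\mu^+$, and verifying that all the terms generated by the product rule reassemble into precisely $-(\QQp\T^+)(z)\,\dbar_k\mu^+(z,k)$ rather than that expression plus leftover terms — this is where the real-valuedness assumption on $\QQp\T^+$ and $\QQm\T^-$ must be used, and getting the complex conjugations to cancel cleanly is delicate. A secondary technical point is justifying the differentiation under the integral sign and the behaviour of $w$ at $k=0$ and $k=\infty$ with enough care to invoke the $\dbar$-Liouville theorem; these are routine given the Schwartz hypotheses and the $L^r$ estimates of \cite{LMS}, but they should be stated explicitly.
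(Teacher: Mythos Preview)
Your central claim is not correct, and the gap is conceptual rather than bookkeeping. The identity
\[
  \dbar_k\big[(-\Delta-4ik\dzb)\mu^+(z,k)\big] \;=\; -(\QQp\T^+)(z)\,\dbar_k\mu^+(z,k)
\]
cannot come out of a pointwise-in-$k$ computation. The left side, after commuting $\dbar_k$ past the $z$-derivatives and inserting the $\dbar$-equation (\ref{dbar_eq_p}), is an expression built from $\T^+(k)$, $e^+_{-z}(k)$, $\overline{\mu^+(z,k)}$ and their $z$-derivatives \emph{at that single value of $k$}. The function $(\QQp\T^+)(z)$, however, is by definition an integral over \emph{all} $k'\in\C$; it cannot appear from applying $\dbar_k$ and $z$-derivatives to (\ref{dbar_eq_p}). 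Indeed a direct check shows that $w=L_k\mu^+$ with $L_k=-\Delta-4ik\dzb+q$ satisfies neither $\dbar_k w=0$ nor $\dbar_k w = \tfrac{\T^+(k)}{4\pi\bar k}e^+_{-z}\overline{w}$ in general; extra first-order terms in $\overline{\mu^+}$ survive.

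The paper's argument produces $\QQp\T^+$ in exactly the way your attempt cannot: it works with the \emph{integral form} $\mu^+=1+\CC T_z\mu^+$ and applies $\dzb(\dz+ik)$ to both sides. The operator $\dzb(\dz+ik)$ commutes with $T_z$, and its commutator with the solid Cauchy transform $\CC$ is the constant-in-$k$ integral $\tfrac{i}{\pi}\dzb\!\int(\cdot)\,dk'$, which on $T_z\mu^+$ yields precisely $\tfrac14\QQp\T^+$. This gives $f:=\dzb(\dz+ik)\mu^+ = \CC T_z f + \tfrac14\QQp\T^+$. Real-valuedness of $\QQp\T^+$ is then used to pull this $z$-dependent constant through the conjugate-linear operator $T_z$, after which inverting $I-\CC T_z$ yields $f=\tfrac14(\QQp\T^+)\mu^+$ directly; there is no Liouville step in the $k$-variable. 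As a minor additional issue, your Liouville step would also be problematic even if the $\dbar_k$-identity held: the term $(\QQp\T^+)(z)\mu^+(z,k)$ in $w$ tends to $(\QQp\T^+)(z)\ne 0$ as $|k|\to\infty$, and $-4ik\,\dzb\mu^+$ carries an explicit factor of $k$.
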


\begin{proof} The functions $(\QQ^\pm\T^\pm)(z)$ are well-defined by \cite[Thm 1.2]{LMS}.

 We will prove the lemma only for $\mu^+$, as the proof for $\mu^-$ is analogous.
Denote the solid Cauchy transform by
\begin{eqnarray}\label{def:C}
  \CC\fii(k) := \frac{1}{\pi }\int_{\C}\frac{\fii(k^\prime)}{k-k^\prime}dk^\prime,
\end{eqnarray}
where $dk^\prime$ denotes the Lebesgue measure. Note that $\CC$ and $\dkb$ are inverses of each other (modulo analytic
functions). Further, define a real-linear operator
\begin{eqnarray}\label{def:T}
  T_z\fii(z,k) := \frac{\T^+(k)}{4\pi\overline{k}}e_{-z}(k)\overline{\fii(z,k)}.
\end{eqnarray}
Nachman \cite{nachman} proved that the operator $[I-\CC T_z]:L^r(\C)\ra L^r(\C)$ is invertible and that $\CC T_z 1\in L^r(\C)$. Now the $\dbar$
equation can be written in the convenient form
\begin{eqnarray}\label{simpledbar}
  \mu^+ = 1 + \CC T_z\mu^+,
\end{eqnarray}
and the solution with appropriate asymptotics is given by
\begin{equation}
  \mu^+-1=[I-\CC T_z]^{-1}(\CC T_z 1).
\end{equation}

From the proof of \cite[Thm 1.1]{LMS} we know that the  commutator $\dzb(\dz+ik)$ with the operator of $\CC$ is given by
$$
  [\dzb(\dz+ik),\CC ] \fii =\frac{i}{\pi}\dzb\int_{\C}\fii(z,k^\prime)dk^\prime,
$$
and that the commutator of $\dzb(\dz+ik)$ with the operator $T_z$ vanishes:
$$
  [\dzb(\dz+ik),T_z] \fii(z,k) = 0.
$$
Applying the above commutator identities to (\ref{simpledbar}) yields
\begin{eqnarray}
  \dzb(\dz+ik)\mu^+
  &=& \nonumber
  \dzb(\dz+ik)(1 + \CC T_z\mu^+) = \dzb(\dz+ik)\CC T_z\mu^+ \\
  &=& \nonumber
  \CC T_z\,\dzb(\dz+ik)\mu^+ + \frac{i}{4\pi^2}\dzb\int_\C
\frac{\T^+(k^\prime)}{\overline{k}^\prime}e_{-z}(k^\prime)\overline{\mu^+(z,k^\prime)}dk^\prime\\
  &=& \label{commkaava}
  \CC T_z\,\dzb(\dz+ik)\mu^+ + \frac{\QQp\T^+}{4}.
\end{eqnarray}
Denote $f=\dzb(\dz+ik)\mu^+$. Now $\QQp\T^+$ is real-valued function of $z$ and does not depend on $k$, so from (\ref{commkaava})  we get
$$
  (I-\CC T_z)(f-\frac{\QQp\T^+}{4}) = \CC T_z(\frac{\QQp\T^+}{4}) = \frac{\QQp\T^+}{4}\CC
T_z\,1.
$$
Thus $f-\QQp\T^+/4 = (\QQp\T^+/4)(I-\CC T_z)^{-1}\CC T_z\,1 = (\QQp\T^+/4)(\mu^+-1)$. Finally
$$
  \dzb(\dz+ik)\mu^+ = f = \frac{\QQp\T^+}{4}(\mu^+-1)+\frac{\QQp\T^+}{4} =
\frac{\QQp\T^+}{4}\mu^+.
$$
\end{proof}

We are ready to prove that the inverse scattering evolution preserves conductivity type if it stays real-valued.
\begin{theorem} \label{thm:weak_cond_type}
Let $q_0\in L^p(\R^2)$ with $1<p<2$ be a real-valued potential with no exceptional points. Assume that the scattering data $\TT^\pm q_0=\T^\pm_0$ are well-defined and satisfy
$$
 |\T^{\pm}_0(k)|\leq C|k|^2 \mbox{ for small }|k|,\qquad
\frac{\T^{\pm}_0(k)}{k}\in \SI(\C),\qquad   \frac{\T^{\pm}_0(k)}{\overline{k}}\in \SI(\C),
$$
 and that $\qis_{\tau}=\QQp\big(e^{i\tau(k^3+\overline{k}^3)}\T^+_0(k)\big)$ is real-valued.  

Then $\qis_\tau$ is of conductivity type in the sense of Definition \ref{def:condtype} for all $\tau\geq 0$ and does not have exceptional points.
\end{theorem}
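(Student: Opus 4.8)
The plan is to realise $\qis_\tau$ in the conductivity form $\gamma_\tau^{-1/2}\Delta\gamma_\tau^{1/2}$ with $\gamma_\tau:=\sigma_\tau^{\,2}$, $\sigma_\tau:=\re\mu^+_\tau(\,\cdot\,,0)$, where $\mu^\pm_\tau$ are the D-bar solutions built from the evolved data $\T^\pm_\tau(k):=e^{i\tau(k^3+\overline{k}^3)}\T^\pm_0(k)$; the entire content then becomes the strict positivity of $\sigma_\tau$. Since $k^3+\overline{k}^3$ is real, the multiplier $e^{i\tau(k^3+\overline{k}^3)}$ is unimodular and $C^\infty$ with polynomially bounded derivatives of all orders, so $\T^\pm_\tau$ retains the bound $|\T^\pm_\tau(k)|\le C|k|^2$ near $0$ and $\T^\pm_\tau(k)/k,\ \T^\pm_\tau(k)/\overline{k}\in\SI(\C)$; moreover the reality symmetry $\overline{\T^+_0(k)}=\T^-_0(-\overline{k})$ of (\ref{pm_tksym}) is preserved, $\overline{\T^+_\tau(k)}=\T^-_\tau(-\overline{k})$, by the elementary identities $(-\overline{k})^3=-\overline{k}^3$ and $(-k)^3=-k^3$. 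Arguing as in the theorem of Section~\ref{sec:radial}, but deriving $\mu^-_\tau(z,k)=\overline{\mu^+_\tau(z,-\overline{k})}$ directly from $\overline{\T^+_\tau(k)}=\T^-_\tau(-\overline{k})$ via uniqueness of D-bar solutions (without the extra radial identity $\T^+_\tau=\T^-_\tau$), one gets, after the change of variables $k\mapsto-\overline{k}$ in (\ref{def:QQm}), the identity $\overline{(\QQp\T^+_\tau)(z)}=(\QQm\T^-_\tau)(z)$; as $\qis_\tau=\QQp\T^+_\tau$ is real by hypothesis, $\QQm\T^-_\tau=\qis_\tau$ is real too, so Lemmas \ref{lemma:muz0_estimate}, \ref{lemma:muz0_der_estimate} and \ref{lemma:qdef} all apply with $\T^\pm=\T^\pm_\tau$.

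Lemma \ref{lemma:qdef}, whose proof applies equally at $k=0$ (or by letting $k\to0$ in (\ref{dbx_p}), which is legitimate by the standard D-bar estimates), gives $(-\Delta+\qis_\tau)\mu^+_\tau(\,\cdot\,,0)=0$, whence $\sigma_\tau=\re\mu^+_\tau(\,\cdot\,,0)$ solves $(-\Delta+\qis_\tau)\sigma_\tau=0$ because $\qis_\tau$ is real. Lemma \ref{lemma:muz0_estimate} gives $|\mu^+_\tau(z,0)-1|\le C\langle z\rangle^{-1}$, so $\sigma_\tau$ is bounded, tends to $1$ at infinity, and (by elliptic regularity, $\qis_\tau\in L^p(\R^2)$ with $p>1$, in fact $\qis_\tau$ continuous and bounded by \cite{LMS}) continuous; Lemma \ref{lemma:muz0_der_estimate} gives $|\nabla\mu^+_\tau(z,0)|\le C\langle z\rangle^{-2}$, so $\nabla\sigma_\tau\in L^p(\R^2)$ since $1<p<2$. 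Granting $\sigma_\tau>0$ pointwise, $\gamma_\tau:=\sigma_\tau^{\,2}$ is then real-valued, bounded, bounded below by a positive constant (being continuous, positive and $\to1$), has $\nabla\gamma_\tau^{1/2}=\nabla\sigma_\tau\in L^p$, and $\gamma_\tau^{-1/2}\Delta\gamma_\tau^{1/2}=\sigma_\tau^{-1}\Delta\sigma_\tau=\qis_\tau$; with $\qis_\tau\in L^p(\R^2)$ this makes $\qis_\tau$ of conductivity type in the sense of Definition \ref{def:condtype}, and then \cite[Lemma 1.5]{nachman} yields the absence of exceptional points.

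Thus everything reduces to proving $\sigma_\tau(z)>0$ for all $z\in\R^2$ and $\tau\ge0$, and this is the main obstacle; I would establish it by a connectedness argument in $\tau$. The constants in Lemmas \ref{lemma:muz0_estimate} and \ref{lemma:muz0_der_estimate} depend on $\tau$ only through finitely many Schwartz seminorms of $\T^+_\tau/k$ and $\T^+_\tau/\overline{k}$, which are locally bounded in $\tau$; combined with continuous dependence of the D-bar solutions on their data, this makes $(\tau,z)\mapsto\mu^+_\tau(z,0)$ continuous, locally uniformly in $z$, with $|\mu^+_\tau(z,0)-1|\le C(\tau)\langle z\rangle^{-1}$ and $C(\cdot)$ locally bounded. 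Set $B:=\{\tau\ge0:\sigma_\tau(z)>0\ \text{for all }z\}$. At $\tau=0$, $\qis_0=q_0$ is of conductivity type, so $\mu^+_0(\,\cdot\,,0)=\gamma_0^{1/2}>0$ and $0\in B$. $B$ is open: for $\tau_*\in B$ the function $\sigma_{\tau_*}$ is continuous, positive and $\to1$, hence $\inf_z\sigma_{\tau_*}>0$, and the locally uniform continuity (the $\tau$-uniform decay controlling large $|z|$) keeps $\sigma_\tau$ bounded away from $0$ for $\tau$ near $\tau_*$. $B$ is closed: if $\tau_n\to\tau_*$ with $\tau_n\in B$ then $\sigma_{\tau_*}=\lim_n\sigma_{\tau_n}\ge0$ pointwise, $\sigma_{\tau_*}$ solves $(-\Delta+\qis_{\tau_*})\sigma_{\tau_*}=0$, so $\Delta\sigma_{\tau_*}=\qis_{\tau_*}\sigma_{\tau_*}\le|\qis_{\tau_*}|\,\sigma_{\tau_*}$, i.e.\ the nonnegative $\sigma_{\tau_*}$ is a (by elliptic regularity, strong) supersolution of $\Delta-|\qis_{\tau_*}|$, and the strong maximum principle forces $\sigma_{\tau_*}\equiv0$ or $\sigma_{\tau_*}>0$; since $\sigma_{\tau_*}\to1$, we get $\sigma_{\tau_*}>0$ and $\tau_*\in B$. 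Hence $B=[0,\infty)$. The two delicate ingredients — and the crux of the theorem — are exactly this $\tau$-continuity with $\tau$-uniform spatial decay, and the passage from the sign-indefinite equation $(-\Delta+\qis_\tau)\sigma_\tau=0$ to the supersolution inequality $(\Delta-|\qis_\tau|)\sigma_\tau\le0$ that makes the strong maximum principle applicable. A posteriori, uniqueness of the complex geometrical optics solution for the now conductivity-type $\qis_\tau$ forces $\mu^+_\tau(\,\cdot\,,0)$ to be real, so in fact $\gamma_\tau=\mu^+_\tau(\,\cdot\,,0)^2$.
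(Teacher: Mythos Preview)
Your overall architecture matches the paper's: identify $\gamma_\tau^{1/2}$ with $\mu^+_\tau(\cdot,0)$, pass from Lemma~\ref{lemma:qdef} to the Schr\"odinger equation at $k=0$, use Lemmas~\ref{lemma:muz0_estimate}--\ref{lemma:muz0_der_estimate} for the boundedness/derivative conditions of Definition~\ref{def:condtype}, and finish with \cite[Lemma~1.5]{nachman}. The derivation of $\overline{\mu^+_\tau(z,-\overline{k})}=\mu^-_\tau(z,k)$ and $\overline{\QQp\T^+_\tau}=\QQm\T^-_\tau$ in your first paragraph is also exactly what the paper does. The divergence is entirely in the positivity step.

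The paper uses no $\tau$-continuity and no connectedness. It cites \cite[Theorem~4.1]{nachman} directly: for \emph{any} data with $\T^+_\tau(k)/\overline{k}$ in the appropriate class, the D-bar solution satisfies $|\mu^+_\tau(z,0)|>0$ for every $z$ --- a structural property of the solid Cauchy transform equation, valid for each $\tau$ separately and independent of any hypothesis on $q_0$ beyond the stated ones. The paper then shows $\mu^+_\tau(z,0)$ is itself real (not just its real part) by combining $\overline{\mu^+_\tau(z,0)}=\mu^-_\tau(z,0)$ with $\mu^+_\tau(z,0)=\mu^-_\tau(z,0)$, the latter coming from the fact that both solve $(-\Delta+\qis_\tau)\mu=0$ at $k=0$.

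Your connectedness argument has a genuine gap at the anchor. You write ``At $\tau=0$, $\qis_0=q_0$ is of conductivity type, so $\mu^+_0(\cdot,0)=\gamma_0^{1/2}>0$,'' but the theorem as stated does \emph{not} assume $q_0$ is of conductivity type --- only that it is real-valued, has no exceptional points, and has well-behaved scattering data. That $\qis_0$ is of conductivity type is part of the \emph{conclusion} at $\tau=0$, so you are assuming what must be proved, and without $0\in B$ the open--closed argument never starts. (In the paper's main application, Corollary~\ref{conclcor}, $q_0$ \emph{is} conductivity-type, so your argument would go through there; but Theorem~\ref{thm:weak_cond_type} is stated more generally.) You also rely on locally uniform $\tau$-continuity of $(\tau,z)\mapsto\mu^+_\tau(z,0)$ with $\tau$-locally-uniform spatial decay; you rightly flag this as delicate, and the paper proves nothing of the kind because the Nachman~4.1 citation renders it unnecessary. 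Your maximum-principle step for closedness is correct but is doing work the paper simply avoids.
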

\begin{proof}
Set
\begin{equation}\label{ttauplus} 
  \T^+_\tau(k)=e^{i \tau(k^3+\overline{k}^3)}\,\T^+_0(k)
\end{equation}
for all $\tau\geq 0$ and note that $\T^+_\tau(k)/\overline{k}\in {\mathcal S}(\C)$ since by assumption $\T^+_0(k)/\overline{k}\in
{\mathcal S}(\C)$. Then by \cite[Thm 4.1]{nachman} we know that for
any fixed $z\in\R^2$ the $\dbar$ equation
\begin{equation}\label{dbar_tau}
  \frac{\partial}{\partial\overline{k}} \mu^+_\tau(z,k) = \frac{\T^+_\tau(k)}{4\pi\overline{k}}e^+_{-k}(z)\overline{\mu^+_\tau(z,k)}
\end{equation}
is uniquely solvable with the asymptotic condition $\mu^+_\tau(z,\cdot)-1\in L^{r}\cap L^{\infty}$ for some $r>2$. Furthermore, \cite[Thm
4.1]{nachman} also implies that $\mu^+_\tau(z,0):=\lim_{k\ra 0}\mu^+_\tau(z,k) \in L^{\infty}(\R^2)$ satisfies
\begin{equation}\label{0_awayfromzero}
  |\mu^+_\tau(z,0)| > 0 \qquad \mbox{ for all }z\in\R^2,
\end{equation}
and there is an $0<\epsilon<1$ such that we have the estimate
\begin{equation}\label{0_holder}
  \sup_z |\mu^+_\tau(z,0)-\mu^+_\tau(z,k)|\leq c|k|^\epsilon
\end{equation}
for $k$ near zero.

Now $\qis_\tau=\QQ^+\T^+_\tau$ is a well-defined continuous $L^p(\R^2)$ function for all $\tau\geq 0$ by \cite{LMS}. Furthermore, $\qis_\tau$ is real-valued by assumption, and $\T^+_\tau(k)/k\in {\mathcal S}(\C)$ is clear from combining (\ref{ttauplus}) with the assumption $\T^+_0(k)/k\in {\mathcal S}(\C)$. Thus we can apply Lemma \ref{lemma:qdef} to see that $\mu^+_\tau$ satisfies 
\begin{equation}\label{qLS2}
  (-\Delta - 4ik\dzb + \qis_\tau)\mu^+_\tau(\,\cdot\, ,k) = 0
\end{equation}
with any fixed $k\in\C\setminus 0$. Formula (\ref{0_holder}) and equation (\ref{qLS2}) imply in the sense of distributions
\begin{eqnarray*}
 \lim_{k\rightarrow 0} (\Delta + 4ik\dzb)\mu^+_\tau(z,k) &=& \Delta\mu^+_\tau(z,0),
\end{eqnarray*}
so $\Delta\mu^+_\tau(z,0)=\qis_\tau(z)\mu^+_\tau(z,0)$. Using (\ref{0_awayfromzero}) we can write
\begin{equation}\label{q_form}
  \qis_\tau(z)= \frac{\Delta\mu^+_\tau(z,0)}{\mu^+_\tau(z,0)}.
\end{equation}

Next we need to prove that $\mu^+_\tau(z,0)$ is real-valued. 
Denote 
\begin{equation}\label{ttauminus} 
 \T^-_\tau(k)=e^{i\tau (k^3+\overline{k}^3)}\, (\TT^- q_0)(k).
\end{equation}
Then we conclude as above that the D-bar equation
\begin{equation}\label{dbar_tauminus}
  \frac{\partial}{\partial\overline{k}} \mu^-_\tau(z,k) = \frac{\T^-_\tau(k)}{4\pi\overline{k}}e^-_{-k}(z)\overline{\mu^-_\tau(z,k)},
\end{equation}
where $e^-_{k}(z)=\exp(i(k\overline{z}+\overline{k}z))$, is uniquely solvable with the asymptotic condition $\mu^-_\tau(z,\cdot)-1\in L^{r}\cap L^{\infty}$ for some $r>2$. 

The real-valuedness of the initial data $q_0$ implies by (\ref{pm_tksym}) the symmetry $\T^-_0(k)=\overline{\T^+_0(-\overline{k})}$. Substituting this to (\ref{ttauplus}) and (\ref{ttauminus}) yields
$$
 \overline{\T^+_\tau(-\overline{k})}=\overline{e^{-i\tau (k^3+\overline{k}^3)}\, \T^+_\tau(-\overline{k})}=
e^{i\tau (k^3+\overline{k}^3)}\, \overline{\T^+_0(-\overline{k})} = \T^-_0(k).
$$
Calculate
\begin{eqnarray*}
\overline{\mu^+_{\tau}(z,-\overline{k})} 
&=& 
1 - \frac{1}{4\pi^2}\int \frac{\overline{\T^+_\tau(\kp)}}{\kp(\overline{-\overline{k}-\kp})}e^{i(\kp z +\overline{\kp}\overline{z})}\mu^+_{\tau}(z,\kp)d\kp \\
&=& 
1 - \frac{1}{4\pi^2}\int \frac{\overline{\T^+_\tau(-\overline{\kp})}}{(-\overline{\kp})(-k+\kp)}e^{ i((-\overline{\kp}) z -\kp\overline{z})}\mu^+_{\tau}(z,-\overline{\kp})d\kp \\
&=& 
1 - \frac{1}{4\pi^2}\int \frac{\T^-_\tau(\kp)}{\overline{\kp}(k-\kp)}e^{ -i(\overline{\kp} z +\kp\overline{z})}
\overline{\overline{\mu^+_{\tau}(z,-\overline{\kp})}}d\kp, 
\end{eqnarray*}
which, in view of uniqueness of solutions to (\ref{dbar_tauminus}), implies that
\begin{equation} \label{mus1}
\overline{\mu^+_{\tau}(z,-\overline{k})}=\mu_{\tau}^-(z,k).
\end{equation}
By (\ref{0_holder}) we have at the limit $k\ra 0$ the identity
\begin{equation} \label{mus}
\overline{\mu^+_{\tau}(z,0)}=\mu_{\tau}^-(z,0).
\end{equation}

A computation similar to (\ref{QQQpmeq}) shows that  $\QQp\T^+_{\tau} = \overline{\QQm\T^-_{\tau}}$. 
The assumption on real-valuedness of $\qis_\tau$ then implies $\qis_\tau=\QQp\T^+_{\tau} =\QQm\T^-_{\tau}$,  and by Lemma \ref{lemma:qdef} the functions  $\mu^{\pm}_{\tau}$ satisfy
\begin{eqnarray*}
    (-\Delta - 4ik\dzb + \qis_\tau)\mu^+_{\tau}(\cdot,k) &=& 0,\\
    (-\Delta - 4ik\dz + \qis_\tau)\mu^-_{\tau}(\cdot,k) &=& 0.
\end{eqnarray*}
 Taking the limit as $k\rightarrow 0$ in each of these equations implies
$$\mu^+_{\tau}(z,0) = \mu^-_{\tau}(z,0).$$
Combining this with \eqref{mus} implies
$$\overline{\mu^+_{\tau}(z,0)}=\mu_{\tau}^-(z,0)=\mu_{\tau}^+(z,0),$$
so $\mu_{\tau}^+(z,0)$ is real-valued.

Finally, by Lemma \ref{lemma:muz0_der_estimate} we have $|\nabla \mu^+_\tau(z,0)| \leq C\langle z\rangle^{-2},
$
so $\nabla \mu^+_\tau(z,0)\in L^p(\R^2)$ for all $1<p<2$. 
Hence $q_\tau$ is of  conductivity type in the sense of Definition \ref{def:condtype} with conductivity $\gamma:=\mu^+_\tau(z,0)^2$.  By \cite[Lemma 1.5]{nachman} $\qis_\tau$ has no exceptional points. 
\end{proof}

\section{Evolution of scattering data }\label{sec:scatevol}

\noindent
Assume that the initial potential $q_0\in L^p(\R^2)$ with $1<p<2$ is a real-valued potential with no exceptional points. Further, assume that the initial scattering data $\TT^\pm q_0=\T^\pm_0$ satisfies
\begin{equation}\label{Tassumptions}
 |\T^{\pm}_0(k)|\leq C|k|^2 \mbox{ for small }|k|,\qquad
\frac{\T^{\pm}_0(k)}{k}\in \SI(\C),\qquad   \frac{\T^{\pm}_0(k)}{\overline{k}}\in \SI(\C),
\end{equation}
and leads to a real-valued evolution 
$$
  \qis_\tau(z)=\QQp (e^{i\tau(k^3+\overline{k}^3)}\T^+_0(k))=\QQp \T^+_\tau.
$$ 
The aim of this section is to prove that the scattering data of $q_\tau$ evolves as expected; more precisely, that  $\TT^+(\QQp \T^+_\tau) = \T^+_\tau$. 

We remark that we do not assume the symmetry $q_0(z)=q_0(|z|)$ in this section.

The function $\QQp\T^+_\tau$ is constructed as explained in Section \ref{sec:Qmaps} using the unique solutions of the D-bar equation 
\begin{equation}\label{scatevol_dbar}
  \frac{\partial}{\partial\overline{k}} \,\mu^+_\tau(z,k) = 
 \frac{\T^+_\tau(k)}{4\pi\overline{k}}e^{-i(kz+\overline{k}\overline{z})}\overline{\mu^+_\tau(z,k)},
\end{equation}
with large $|k|$ asymptotics $\mu^+_\tau(z,\cdot)-1\in L^\infty\cap L^{r}(\C)$ for some $2<r<\infty$. By \cite{LMS} we know that 
$
  \qis_\tau=\QQp\T^+_\tau:\R^2\ra \C
$ 
is a well-defined continuous $L^p(\R^2)$ function with any $1<p<2$ and for all $\tau>0$. 

We wish to apply the nonlinear Fourier transform $\TT^+$ to the function $\qis_\tau$.
By Theorem \ref{thm:weak_cond_type} we know that $\qis_\tau $ is of conductivity type:
\begin{equation}\label{qForm}
  \qis_\tau(z)= \frac{\Delta\mu^+_\tau(z,0)}{\mu^+_\tau(z,0)},
\end{equation}
and does not have exceptional points.  Thus there exists for any $k\in\C\setminus 0$  a unique solution of the partial differential equation
\begin{equation}\label{rautalanka1}
  (-\Delta - 4ik\dzb + \qis_\tau(z))\mu^+_{\tau}(z,k) = 0
\end{equation}
with large $|z|$ asymptotics $\mu^+_{\tau}(\,\cdot\,,k)-1\in W^{1,\tilde{p}}(\R^2)$.

If we had additional decay in $\qis_\tau(z)$ as $|z|\ra\infty$, then we could construct $\TT^+\qis_\tau$ by the integral 
\begin{equation}\label{maybeScat}
 \int_{\R^2} e^{i(kz+\overline{k}\overline{z})}\qis_\tau(z)\mu^+_\tau(z,k)dz,
\end{equation}
which would be absolutely convergent. Furthermore, we could make use of \cite[Theorem 2.1]{nachman},  stating that the D-bar derivative $(\partial/\partial\overline{k}) \,\mu^+_\tau(z,k)$ is equal to
\begin{equation}\label{scatevol_dbar2}
 \frac{(\TT^+(\QQp\T^+_\tau))(k)}{4\pi\overline{k}}e^{-i(kz+\overline{k}\overline{z})}\overline{\mu^+_\tau(z,k)}.
\end{equation}
Furthermore, in view of Lemma \ref{lemma:qdef}, the unique solutions of equations (\ref{scatevol_dbar}) and (\ref{rautalanka1}) are the same functions. Therefore, comparing (\ref{scatevol_dbar}) and (\ref{scatevol_dbar2}) would yield the desired identity $\TT^+(\QQp \T^+_\tau) = \T^+_\tau$.

However, we do not have available any extra decay  in $\qis_\tau(z)$ as $|z|\ra\infty$; we just know that $|\qis_\tau(z)|\leq C\langle z\rangle^{-2}$. 
Therefore, at this point it is even unclear whether formula $\TT^+(\QQp\T^+_\tau)$ is well-defined.

To analyse $\TT^+(\QQp\T^+_\tau)$, we add and subtract the constant $1$ in (\ref{maybeScat}), write 
\begin{eqnarray}
 \TT^+ q_\tau 
 &=& \nonumber 
 \int_{\R^2} e^{i(kz+\overline{k}\overline{z})}\qis_\tau(z)(\mu^+_\tau(z,k)-1)dz+
 \int_{\R^2} e^{i(kz+\overline{k}\overline{z})}\qis_\tau(z)dz\\
  &=&\label{maybeScat2}
  T_1(k)+T_2(k),
\end{eqnarray}
and interpret $T_1$ and $T_2$ as follows.

For fixed $k\in\C\setminus 0$ the term $T_1(k)$ in (\ref{maybeScat2}) is bounded in absolute value by the H\"older inequality because $\mu^+_\tau(\,\cdot\,,k)-1\in L^{\tilde{p}}(\R^2)$ and $\qis_\tau\in L^{\tilde{p}^\prime}(\R^2)$. Note that $1<\tilde{p}^\prime<2$ since $2<\tilde{p}<\infty$. Furthermore, the norm $\|\mu^+_\tau(\,\cdot\,,k)-1\|_{L^{\tilde{p}}(\R^2)}$ depends continuously on $k$. This can be seen  as follows. 
The unique solution of the partial differential equation (\ref{rautalanka1}) with appropriate asymptotics is
given by
\begin{equation}\label{exp_constr}
  \mu^+_\tau(z,k)-1 = [I+g_k\ast(q_\tau\cdot\ )]^{-1}(g_k \ast q_\tau),
\end{equation}
as shown in \cite[p.82]{nachman}. Note that we have the estimates
\begin{eqnarray}
  \label{oper_est1}
\|g_k\ast q\|_{W^{1,\tilde{p}}(\R^2)} &\leq& C_k\|q\|_{L^p(\R^2)},\\
  \label{oper_est2}
  \|g_k\ast(q\cdot\ )\|_{L(W^{1,\tilde{p}}(\R^2))} &\leq& C_k^\prime\|q\|_{L^p(\R^2)}.
\end{eqnarray}
By \cite[formula (1.6)]{nachman} we have for any $h\in L^p(\R^2)$
$$
  g_k\ast h = -\frac{1}{4ik}[\dbar_z^{-1}-(\partial_z+ik)^{-1}\partial\dbar^{-1}]h,
$$
and by \cite[formula (1.2)]{nachman} we know that $\|(\partial_z+ik)^{-1}h\|_{L^{\tilde{p}}(\R^2)}\leq c\|h\|_{L^p(\R^2)}$ with $c$ independent of $k$. Hence the constants $C_k$ and $C_k^\prime$ in (\ref{oper_est1}) and (\ref{oper_est2}) and the norm $\|\mu^+_\tau(\,\cdot\,,k)-1\|_{L^{\tilde{p}}(\R^2)}$ depend continuously on $k$. Thus $T_1\in L^1_{\mbox{\tiny loc}}(\C\setminus 0)$.

The second term $T_2(k)$ in (\ref{maybeScat2}) can be interpreted as the Fourier transform of an $L^p(\R^2)$ function, the result belonging to $L^{p^\prime}(\R^2)$  by the Riesz-Thorin interpolation theorem. Here $p^\prime$ is  defined by $1=1/p+1/p^\prime$. 
Therefore $T_2\in L^1_{\mbox{\tiny loc}}(\C\setminus 0)$.

We can conclude from (\ref{maybeScat2}) that $\TT^+ q_\tau \in L^1_{\mbox{\tiny loc}}(\C\setminus 0)$, and consequently it can be interpreted as a distribution: $\TT^+ q_\tau \in \mathcal{D}^\prime(\C\setminus 0)$. 

Having established the existence of $\TT^+(\QQp \T^+_\tau)$ in the sense of distributions, we proceed to show that it equals $\T^+_\tau$. As mentioned above, we cannot apply \cite[Theorem 2.1]{nachman} to show that $(\partial/\partial\overline{k}) \,\mu^+_\tau(z,k)$ equals (\ref{scatevol_dbar2}) because there is not enough decay in $\qis_\tau(z)$ available as $z\ra \infty$. We overcome this problem by generalizing \cite[Theorem 2.1]{nachman}.
The crucial new technique is to approximate $\qis_\tau=\mu^+_\tau(z,0)^{-1}\Delta\mu^+_\tau(z,0)$ with a rapidly decaying conductivity-type potential in the norm of $L^p(\R^2)$ space so that the function $\mu^+_\tau(z,0)$ is approximated simultaneously. 

\begin{lemma}\label{Lemma_q_p} Let $q:\R^2\ra\C$ be a continuous, real-valued, conductivity type potential of the form $q(z)=\mu(z)^{-1}\Delta\mu(z)$. 
Assume that these estimates hold for all $z\in\R^2:$
\begin{eqnarray}
\label{MUESTQ}
|q(z)| &\leq& C\langle z\rangle^{-2},\\
\label{MUEST0}
  |\mu(z)| &\geq& c>0,\\
\label{MUEST1}
  |\mu(z)-1| &\leq& C\langle z\rangle^{-1},\\
\label{MUEST2}
  |\nabla \mu(z)| &\leq& C\langle z\rangle^{-2}.
\end{eqnarray}
Set $
  \fii(z) = \exp(-|z|^2) 
$
and $\fii_\eps(z) = \fii(\eps z)$ for all $\eps>0$. Define an approximation to $\mu$ by
\begin{equation}\label{muepsdef}
  \mu^{(\eps)}(z) := 1 + \fii_\eps(z)(\mu(z)-1),
\end{equation}
and an approximation to $q$ by
\begin{equation}\label{qapproxi}
  q^{(\eps)}(z) := \frac{\Delta \mu^{(\eps)}(z)}{\mu^{(\eps)}(z)}.
\end{equation}
 Then for any exponent $1<p<2$ we have 
\begin{equation}\label{Lpconvergence}
  \lim_{\eps\ra 0}\|q^{(\eps)}-q\|_{L^p(\R^2)} =  0.
\end{equation}
\end{lemma}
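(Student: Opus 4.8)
The plan is to estimate $q^{(\eps)}-q$ by splitting $\R^2$ into the region $|z|\leq 1/\sqrt\eps$ (say), where $\fii_\eps$ is close to $1$ and the approximation is good, and the region $|z|\geq 1/\sqrt\eps$, where both $q$ and $q^{(\eps)}$ are small because of the decay hypothesis \eqref{MUESTQ} and the fact that $q^{(\eps)}$ inherits comparable decay. First I would write out $q^{(\eps)}$ explicitly from \eqref{muepsdef} and \eqref{qapproxi}: since $\mu^{(\eps)}-1=\fii_\eps\,(\mu-1)$ we have, by the product rule,
\begin{equation}\label{Deltamueps}
  \Delta\mu^{(\eps)} = \fii_\eps\,\Delta\mu + 2\nabla\fii_\eps\cdot\nabla\mu + (\Delta\fii_\eps)(\mu-1),
\end{equation}
and $\Delta\mu=q\,\mu$ by hypothesis, so $\Delta\mu^{(\eps)}=\fii_\eps\, q\,\mu + 2\nabla\fii_\eps\cdot\nabla\mu + (\Delta\fii_\eps)(\mu-1)$. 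Note $\nabla\fii_\eps(z)=\eps(\nabla\fii)(\eps z)$ and $\Delta\fii_\eps(z)=\eps^2(\Delta\fii)(\eps z)$, both uniformly bounded with a factor $\eps$ or $\eps^2$ out front, and supported (essentially, up to Gaussian tails) where $|z|\lesssim 1/\eps$. Then
\begin{equation}\label{diff}
  q^{(\eps)}-q = \frac{\Delta\mu^{(\eps)}}{\mu^{(\eps)}} - \frac{\Delta\mu}{\mu}
  = \frac{1}{\mu^{(\eps)}}\Big(\fii_\eps q\mu + 2\nabla\fii_\eps\cdot\nabla\mu + (\Delta\fii_\eps)(\mu-1)\Big) - q.
\end{equation}

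The next step is to bound the denominator from below: from \eqref{MUEST0} and \eqref{MUEST1}, $\mu^{(\eps)}=1+\fii_\eps(\mu-1)$ is a convex-type combination of $1$ and $\mu$, so $|\mu^{(\eps)}(z)|\geq \min(1,c)>0$ uniformly in $\eps$ and $z$ (one checks $\re\mu$ and $\im\mu$ separately, or simply uses $0\le\fii_\eps\le1$ together with the lower bound on $|\mu|$ away from where $\mu$ is near $1$; near $|z|$ large $\mu^{(\eps)}\approx1$). With that, I would group the terms in \eqref{diff}: write $q^{(\eps)}-q = (\fii_\eps-1)q\cdot\frac{\mu}{\mu^{(\eps)}} + q\big(\frac{\mu}{\mu^{(\eps)}}-1\big) + \frac{1}{\mu^{(\eps)}}\big(2\nabla\fii_\eps\cdot\nabla\mu + (\Delta\fii_\eps)(\mu-1)\big)$, wait—more cleanly, since $\mu-\mu^{(\eps)}=(1-\fii_\eps)(\mu-1)$, regroup as
\begin{equation*}
  q^{(\eps)}-q = \underbrace{(\fii_\eps-1)\,q}_{\text{(I)}}
  + \underbrace{q\cdot\frac{(1-\fii_\eps)(\mu-1)\cdot(\text{stuff})}{\mu^{(\eps)}}}_{\text{(II)}}
  + \underbrace{\frac{2\nabla\fii_\eps\cdot\nabla\mu + (\Delta\fii_\eps)(\mu-1)}{\mu^{(\eps)}}}_{\text{(III)}},
\end{equation*}
and estimate each piece in $L^p$. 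For (I): $|(\fii_\eps-1)q|\leq |q|\,\mathbf{1}_{|z|\gtrsim 1/\eps} + |q|\cdot|\fii_\eps-1|$, and since $q\in L^p$ (from \eqref{MUESTQ}, $\langle z\rangle^{-2}\in L^p$ for $p>1$), dominated convergence gives $\|(\fii_\eps-1)q\|_{L^p}\to0$. For (III): $|\nabla\fii_\eps\cdot\nabla\mu|\leq C\eps\langle z\rangle^{-2}\mathbf{1}_{|z|\lesssim 1/\eps}$ by \eqref{MUEST2}, whose $L^p$ norm is $\lesssim \eps\big(\int_{|z|\le 1/\eps}\langle z\rangle^{-2p}\big)^{1/p}$, which tends to $0$ since $2p>2$; similarly $|(\Delta\fii_\eps)(\mu-1)|\leq C\eps^2\langle z\rangle^{-1}\mathbf{1}_{|z|\lesssim1/\eps}$ by \eqref{MUEST1}, with $L^p$ norm $\lesssim \eps^2(1/\eps)^{(2-p)/p}=\eps^{(3p-2)/p}\to0$. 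Term (II), which collects the remaining $q$-times-small-factor contributions, is bounded by $C|q|\cdot|1-\fii_\eps|$ up to the uniform lower bound on $\mu^{(\eps)}$, hence goes to zero exactly as (I) does.

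The main obstacle I anticipate is not any single estimate but making the bookkeeping in \eqref{diff} clean: one must be careful that every error term either carries an explicit power of $\eps$ multiplying a fixed integrable (in $L^p$) profile, or is of the form $q\times(\text{something that}\to 0\text{ pointwise, boundedly})$. Once the decomposition is organized so that this dichotomy is visible, each term is dispatched by either dominated convergence or an explicit $\eps$-power computed from $\int_{|z|\le 1/\eps}\langle z\rangle^{-\alpha p}\,dz$, using crucially that $2p>2$ and $p>1$. Summing the three (or four) bounds and letting $\eps\to0$ yields \eqref{Lpconvergence}.
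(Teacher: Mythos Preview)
Your approach is correct and essentially the same as the paper's: both expand $\Delta\mu^{(\eps)}$ by the product rule, bound $\mu^{(\eps)}$ below uniformly via the convex-combination observation, and then estimate the three resulting pieces (the $(1-\fii_\eps)q$ term, the $\nabla\fii_\eps\cdot\nabla\mu$ term, and the $(\Delta\fii_\eps)(\mu-1)$ term) separately. The only notable difference is that the paper handles the $(1-\fii_\eps)\Delta\mu$ piece by an explicit split at $|z|=\eps^{-1/4}$ with quantitative $\eps$-powers, whereas you invoke dominated convergence; your route is slightly cleaner there, though in a final write-up you would need to replace the informal ``$\mathbf{1}_{|z|\lesssim 1/\eps}$ up to Gaussian tails'' by the actual Gaussian bound (as the paper does via the scaling identity $\|\,|z|^s\fii_\eps\,\|_p=\eps^{-s-2/p}\|\,|w|^s\fii\,\|_p$), since $\langle z\rangle^{-1}\notin L^p$ and the cutoff is not literal.
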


The proof of Lemma \ref{Lemma_q_p} is postponed to Appendix \ref{Appendix:Lemma}. 
We are ready to prove the main theorem of this section.

\begin{theorem}\label{mainThm2}
Let $q_0\in L^p(\R^2)$ with $1<p<2$ be a real-valued potential with no exceptional points. Assume that the scattering data $\TT^\pm q_0=\T^\pm_0$ satisfies
$$
  |\T^{\pm}_0(k)|\leq C|k|^2 \mbox{ for small }|k|,\qquad\frac{\T^{\pm}_0(k)}{k}\in \SI(\C),\qquad   \frac{\T^{\pm}_0(k)}{\overline{k}}\in \SI(\C).
$$
Assume that $q_\tau = \QQ^+\T^+_\tau = \QQ^+(e^{i\tau(k^3+\overline{k}^3)}\T^+_0(k))$ is real-valued.

Then $(\TT^+(\QQ^+\T^+_ \tau))(k)=\T^+_ \tau(k)$ for all $k\in\C\setminus 0$.
\end{theorem}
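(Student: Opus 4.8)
The plan is to remove the sole obstruction — the weak decay $|\qis_\tau(z)|\leq C\langle z\rangle^{-2}$ — by approximation, thereby reducing the claim to \cite[Theorem 2.1]{nachman}. Write $\mu(z):=\mu^+_\tau(z,0)$. By \cite{LMS} we have $|\qis_\tau(z)|\leq C\langle z\rangle^{-2}$; by Theorem \ref{thm:weak_cond_type} and its proof, $\qis_\tau(z)=\mu(z)^{-1}\Delta\mu(z)$ with $\mu$ real-valued and $|\mu(z)|\geq c>0$; and by Lemmas \ref{lemma:muz0_estimate} and \ref{lemma:muz0_der_estimate}, $|\mu(z)-1|\leq C\langle z\rangle^{-1}$ and $|\nabla\mu(z)|\leq C\langle z\rangle^{-2}$. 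Thus the hypotheses of Lemma \ref{Lemma_q_p} hold, and with $\mu^{(\eps)},q^{(\eps)}$ as in (\ref{muepsdef})--(\ref{qapproxi}) we get $q^{(\eps)}\to\qis_\tau$ in $L^p(\R^2)$ for every $1<p<2$. Since $\mu^{(\eps)}-1=\fii_\eps\cdot(\mu-1)$ is smooth and rapidly decreasing and $\mu^{(\eps)}$ is real-valued and bounded below, $q^{(\eps)}=(\mu^{(\eps)})^{-1}\Delta\mu^{(\eps)}$ is a rapidly decreasing, real-valued, conductivity-type potential with conductivity $(\mu^{(\eps)})^2$, and $\mu^{(\eps)}$ is its $k=0$ CGO solution; by \cite[Lemma 1.5]{nachman} $q^{(\eps)}$ has no exceptional points, its scattering transform $t^{(\eps)}:=\TT^+q^{(\eps)}$ is given by an absolutely convergent integral, and by \cite[Theorem 2.1]{nachman}
\begin{equation}\label{plan:dbareps}
  \frac{\partial}{\partial\overline{k}}\,\mu^{(\eps)}(z,k)=\frac{t^{(\eps)}(k)}{4\pi\overline{k}}\,e^{-i(kz+\overline{k}\overline{z})}\,\overline{\mu^{(\eps)}(z,k)}\qquad(k\in\C\setminus 0),
\end{equation}
where $\mu^{(\eps)}(z,k)$ is the CGO solution of $(-\Delta+q^{(\eps)})\psi=0$ with the asymptotics (\ref{exp_asymp}).

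The next step is to pass to the limit $\eps\to 0$ in (\ref{plan:dbareps}). Using the resolvent formula (\ref{exp_constr}), the estimates (\ref{oper_est1})--(\ref{oper_est2}) with $k$-continuous constants, the invertibility of $I+g_k\ast(\qis_\tau\,\cdot\,)$ (valid since $\qis_\tau$ has no exceptional points), and continuity of operator inversion, one obtains $\mu^{(\eps)}(\,\cdot\,,k)-1\to\mu^+_\tau(\,\cdot\,,k)-1$ in $W^{1,\tilde p}(\R^2)$, locally uniformly in $k\in\C\setminus 0$; here $\mu^+_\tau(\,\cdot\,,k)$ solves (\ref{rautalanka1}) and, by Lemma \ref{lemma:qdef}, coincides with the solution of (\ref{scatevol_dbar}). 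By the Sobolev embedding $W^{1,\tilde p}\hookrightarrow C_b(\R^2)$ this also gives $\mu^{(\eps)}(z,k)\to\mu^+_\tau(z,k)$ uniformly in $z$ and locally uniformly in $k\neq0$. Splitting $t^{(\eps)}=T_1^{(\eps)}+T_2^{(\eps)}$ exactly as in (\ref{maybeScat2}), we have $T_2^{(\eps)}=\widehat{q^{(\eps)}}\to\widehat{\qis_\tau}=T_2$ in $L^{p^\prime}(\R^2)$ by continuity of the Fourier transform $L^p\to L^{p^\prime}$, while $T_1^{(\eps)}\to T_1$ in $L^1_{\mbox{\tiny loc}}(\C\setminus 0)$ by H\"older's inequality (using $\qis_\tau,q^{(\eps)}\in L^{\tilde p^\prime}$ and the $L^{\tilde p}$-convergence of $\mu^{(\eps)}(\,\cdot\,,k)-1$), the uniform-in-$\eps$ local bounds being the same as those used in \S\ref{sec:scatevol} to place $T_1$ in $L^1_{\mbox{\tiny loc}}$. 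Hence $t^{(\eps)}\to\TT^+\qis_\tau$ in $\mathcal{D}^\prime(\C\setminus 0)$.

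Now fix $z$ and test (\ref{plan:dbareps}) against $\chi\in C^\infty_0(\C\setminus 0)$. The left-hand side converges to $-\int\mu^+_\tau(z,k)\,\dkb\chi(k)\,dk$; on the right-hand side the coefficient $\frac{\chi(k)}{4\pi\overline{k}}e^{-i(kz+\overline{k}\overline{z})}\overline{\mu^{(\eps)}(z,k)}$ converges uniformly on $\mathrm{supp}\,\chi$, so, using the uniform $L^1(\mathrm{supp}\,\chi)$ bound on $t^{(\eps)}$, the right-hand side converges to $\int\frac{(\TT^+\qis_\tau)(k)}{4\pi\overline{k}}e^{-i(kz+\overline{k}\overline{z})}\overline{\mu^+_\tau(z,k)}\chi(k)\,dk$. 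Therefore
\begin{equation}\label{plan:dbarlim}
  \dkb\mu^+_\tau(z,k)=\frac{(\TT^+\qis_\tau)(k)}{4\pi\overline{k}}\,e^{-i(kz+\overline{k}\overline{z})}\,\overline{\mu^+_\tau(z,k)}\qquad\mbox{in }\mathcal{D}^\prime(\C\setminus 0).
\end{equation}
Since $\mu^+_\tau$ also solves (\ref{scatevol_dbar}), subtraction yields $\big((\TT^+\qis_\tau)(k)-\T^+_\tau(k)\big)(4\pi\overline{k})^{-1}e^{-i(kz+\overline{k}\overline{z})}\overline{\mu^+_\tau(z,k)}=0$ in $\mathcal{D}^\prime(\C\setminus 0)$. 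Fixing $k_0\neq0$ and choosing $z$ with $|z|$ large enough — so that, as $\mu^+_\tau(\,\cdot\,,k_0)-1\in W^{1,\tilde p}(\R^2)$ vanishes at infinity and $\mu^+_\tau(z,\,\cdot\,)$ is smooth on $\C\setminus0$, the factor $\overline{\mu^+_\tau(z,k)}$ is $C^\infty$ and nonvanishing near $k_0$ — one divides by this factor to get $\TT^+\qis_\tau=\T^+_\tau$ in $\mathcal{D}^\prime$ near $k_0$, hence on all of $\C\setminus0$. As $\T^+_\tau$ is continuous there, $(\TT^+(\QQp\T^+_\tau))(k)=\T^+_\tau(k)$ for every $k\in\C\setminus0$.

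The main obstacle is the passage to the limit in (\ref{plan:dbareps}): one must control the approximants $\mu^{(\eps)}(\,\cdot\,,k)$ in a topology strong enough — locally uniformly in $k$ and uniformly in $z$ — to pair them against the merely distributionally convergent transforms $t^{(\eps)}$, which is precisely why Lemma \ref{Lemma_q_p} approximates $\qis_\tau$ and its square root $\mu$ simultaneously. A subsidiary point is to verify that each $q^{(\eps)}$ is genuinely a rapidly decreasing conductivity-type potential, so that Nachman's scattering theory — in particular \cite[Theorem 2.1]{nachman} — applies to it without change.
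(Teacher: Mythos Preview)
Your proposal is correct and follows essentially the same route as the paper: approximate $\qis_\tau$ and $\mu^+_\tau(\,\cdot\,,0)$ simultaneously via Lemma~\ref{Lemma_q_p}, apply \cite[Theorem 2.1]{nachman} to the rapidly decreasing approximants, pass to the limit in the $\dbar$ equation using the resolvent identity for (\ref{exp_constr}) and the $T_1+T_2$ splitting (\ref{maybeScat2}), and then compare with (\ref{scatevol_dbar}). The only cosmetic difference is in the last step: the paper simply notes that $\mu^+_\tau(\,\cdot\,,k)\not\equiv 0$ (since $\mu^+_\tau(\,\cdot\,,k)-1\in W^{1,\tilde p}$) and that $\T^+_\tau$ is smooth, whereas you localize near a fixed $k_0$ and pick $|z|$ large to make $\overline{\mu^+_\tau(z,\,\cdot\,)}$ nonvanishing and divisible---both reach the same conclusion.
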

\begin{proof}
By \cite[Theorem 1.2]{LMS} we know that $q_\tau:\R^2\ra\C$ is a continuous function and satisfies $|q_\tau(z)|\leq C\langle z\rangle^{-2}$. Thus $q_\tau$ belongs to $L^p(\R^2)$ for any $1<p<2$. By Theorem \ref{thm:weak_cond_type} and \cite[Lemma 1.5]{nachman} we may conclude that $q_\tau$ does not have exceptional points and  we can write $q_\tau(z)= (\Delta\mu^+_\tau(z,0))/\mu^+_\tau(z,0)$.

Define for all $\eps>0$ the approximate potential
\begin{equation}\label{qapproxi2}
  q_\tau^{(\eps)}(z) := \frac{\Delta \mu_\tau^{(\eps)}(z,0)}{\mu_\tau^{(\eps)}(z,0)},
\end{equation}
where the function $\mu_\tau^{(\eps)}$ is given by
\begin{equation}\label{muepsdef2}
  \mu_\tau^{(\eps)}(z,0) := 1 + \fii_\eps(z)(\mu^+_\tau(z,0)-1).
\end{equation}
Now the approximate potential $q_\tau^{(\eps)}$ decays exponentially when $|z|\ra\infty$.
Thus, since $\mu^+_\tau(z,0)$ is real-valued, the function $q_\tau^{(\eps)}(z)$ satisfies the assumptions of \cite[Theorem 1.1]{nachman}. There are no exceptional points for $q_\tau^{(\eps)}(z)$ with any $\eps>0$, and we can define for all $k\in \C\setminus 0$
\begin{equation}\label{exp_constr_eps}
  \mu_\tau^{(\eps)}(z,k)-1 = [I+g_k\ast(q_\tau^{(\eps)}\cdot\ )]^{-1} (g_k \ast q_\tau^{(\eps)}).
\end{equation}
Furthermore, by \cite[Theorem 2.1]{nachman} the functions $\mu_\tau^{(\eps)}(z,k)$ satisfy the $\dbar$-equation
\begin{equation}\label{taudbar_eps}
  \frac{\partial}{\partial\overline{k}}\mu_\tau^{(\eps)}(z,k) = \frac{(\TT^+ (q_\tau^{(\eps)}))(k)}{4\pi\overline{k}}e^+_{-z}(k)\overline{\mu_\tau^{(\eps)}(z,k)}
\end{equation}
with the asymptotic condition $\mu_\tau^{(\eps)}(z,\cdot)-1\in L^r\cap L^\infty(\C)$.

How are the functions $\mu_\tau^{(\eps)}(z,k)$ related to the solutions $\mu^+_\tau(z,k)$? It is clear from (\ref{muepsdef2}) that $\mu_\tau^{(\eps)}(z,0)$ tends to $\mu^+_\tau(z,0)$ as $\eps\ra 0$, but how about nonzero $k$? Subtracting (\ref{exp_constr_eps}) from (\ref{exp_constr}) and using the resolvent equation 
\begin{eqnarray*}
   &&
  [I+g_k\ast(q_\tau\cdot\ )]^{-1}- [I+g_k\ast(q_\tau^{(\eps)}\cdot\ )]^{-1}\\
  &=&
  -[I+g_k\ast(q_\tau\cdot\ )]^{-1}[g_k\ast((q_\tau-q_\tau^{(\eps)})\cdot\ )] [I+g_k\ast(q_\tau^{(\eps)}\cdot\ )]^{-1}
\end{eqnarray*}
together with (\ref{oper_est1}) and (\ref{oper_est2}) yield
\begin{eqnarray}
  && \nonumber
  \|\mu_\tau^+(z,k)-\mu_\tau^{(\eps)}(z,k)\|_{W^{1,\tilde{p}}}\\
  &=& \nonumber
  \| [I+g_k\ast(q_\tau\cdot\ )]^{-1}(g_k \ast q_\tau)- [I+g_k\ast(q_\tau^{(\eps)}\cdot\ )]^{-1} (g_k \ast q_\tau^{(\eps)})\|_{W^{1,\tilde{p}}}\\
  &\leq& \nonumber
  \| [I+g_k\ast(q_\tau\cdot\ )]^{-1}(g_k \ast (q_\tau- q_\tau^{(\eps)}))\|_{W^{1,\tilde{p}}}\\
  && \nonumber
  +\|([I+g_k\ast(q_\tau\cdot\ )]^{-1}- [I+g_k\ast(q_\tau^{(\eps)}\cdot\ )]^{-1}) (g_k \ast q_\tau^{(\eps)})\|_{W^{1,\tilde{p}}}\\
  &\leq& \nonumber
  \| [I+g_k\ast(q_\tau\cdot\ )]^{-1}\|_{L(W^{1,\tilde{p}})}\|(g_k \ast (q_\tau- q_\tau^{(\eps)}))\|_{W^{1,\tilde{p}}}\\
  && \nonumber
  +\|([I+g_k\ast(q_\tau\cdot\ )]^{-1}[g_k\ast((q_\tau-q_\tau^{(\eps)})\cdot\ )] [I+g_k\ast(q_\tau^{(\eps)}\cdot\ )]^{-1}) (g_k \ast q_\tau^{(\eps)})\|_{W^{1,\tilde{p}}}\\
  &\leq& \nonumber
  C_k^\prime\|(g_k \ast (q_\tau- q_\tau^{(\eps)}))\|_{W^{1,\tilde{p}}}\\
  && \nonumber
  +C_k^{\prime\prime}\|[I+g_k\ast(q_\tau^{(\eps)}\cdot\ )]^{-1}\|_{L(W^{1,\tilde{p}})} 
  \|g_k\ast((q_\tau-q_\tau^{(\eps)})\cdot\ )\|_{L(W^{1,\tilde{p}})}  
  \|g_k \ast q_\tau^{(\eps)}\|_{W^{1,\tilde{p}}}\\
  &\leq& \nonumber
  C_k^{\prime\prime\prime}\Big(1+\|[I+g_k\ast(q_\tau^{(\eps)}\cdot\ )]^{-1}\|_{L(W^{1,\tilde{p}})} 
  \|g_k \ast q_\tau^{(\eps)}\|_{W^{1,\tilde{p}}}\Big)\|q_\tau- q_\tau^{(\eps)}\|_{L^p}.
\end{eqnarray}
From Lemma \ref{Lemma_q_p} we know that
\begin{equation}\label{Lpconvergencetaas}
  \lim_{\eps\ra 0}\|q_\tau^{(\eps)}-q_\tau\|_{L^p(\R^2)} =  0.
\end{equation}
Therefore, recalling that the constants $C_k$ and $C_k^\prime$ in (\ref{oper_est1}) and (\ref{oper_est2}) depend continuously on $k$, we can conclude that 
\begin{equation}\label{mukepsconv}
  \lim_{\eps\ra 0} \|\mu_\tau^+(\,\cdot\,,k)-\mu_\tau^{(\eps)}(\,\cdot\,,k)\|_{W^{1,\tilde{p}}(\R^2)} = 0,
\end{equation}
where the convergence is uniform for $k\in K\subset\C\setminus 0$ with any compact $K$. We remark that since $W^{1,\tilde{p}}(\R^2)$ functions are continuous by the Sobolev imbedding theorem, equation (\ref{mukepsconv}) implies uniform point-wise convergence as well.

Now we can use (\ref{maybeScat2}) and (\ref{mukepsconv}) to calculate
\begin{eqnarray}
  \lim_{\eps\ra 0} \TT^+(q_\tau^{(\eps)})
  &=& \nonumber
  \lim_{\eps\ra 0} \int_{\R^2} e^{i(kz+\overline{k}\overline{z})} q_\tau^{(\eps)}(z)\mu_\tau^{(\eps)}(z,k)dz\\
  &=& \nonumber
  \lim_{\eps\ra 0} \int_{\R^2} e^{i(kz+\overline{k}\overline{z})} q_\tau^{(\eps)}(z)(\mu_\tau^{(\eps)}(z,k)-1)dz\\
  && \nonumber
  +\lim_{\eps\ra 0} \int_{\R^2} e^{i(kz+\overline{k}\overline{z})} q_\tau^{(\eps)}(z)dz\\
  &=:& \nonumber
  \lim_{\eps\ra 0} T_1^{(\eps)}(k) +  \lim_{\eps\ra 0} T_2^{(\eps)}(k)\\
  &=& \nonumber
   T_1(k) + T_2(k)\\
  &=& \nonumber
 \TT^+(q_\tau)\\
  &=& \label{qscatpreka}
 \TT^+(\QQ^+\T^+_\tau),
\end{eqnarray}
Note that $\lim_{\eps\ra 0} T_1^{(\eps)}=T_1$ in the topology of $L^1_{\mbox{\tiny loc}}(\C\setminus 0)$ because of (\ref{mukepsconv}) and $\lim_{\eps\ra 0} q_\tau^{(\eps)}=q_\tau$ in $L^{\tilde{p}^\prime}(\R^2)$ by Lemma \ref{Lemma_q_p}. Also,  $\lim_{\eps\ra 0} T_2^{(\eps)}=T_2$ in the topology of $L^{p^\prime}(\R^2)$ because $\lim_{\eps\ra 0} q_\tau^{(\eps)}=q_\tau$ in $L^p(\R^2)$ by Lemma \ref{Lemma_q_p}. Thus the convergence in (\ref{qscatpreka}) happens in particular in $L^1_{\mbox{\tiny loc}}(\C\setminus 0)$, and consequently in $\mathcal{D}^\prime(\C\setminus 0)$.

We know by construction that the unique solutions $\mu^+_\tau$ satisfy
\begin{equation}\label{scatevol_dbar_again!}
  \frac{\partial}{\partial\overline{k}} \,\mu^+_\tau(z,k) = 
 \frac{\T^+_\tau(k)}{4\pi\overline{k}}e^{-i(kz+\overline{k}\overline{z})}\overline{\mu^+_\tau(z,k)},
\end{equation}
The above analysis shows that equation (\ref{taudbar_eps}) converges in the sense of distributions $\mathcal{D}^{\prime}(\C\setminus0)$ to 
\begin{equation}\label{scatevol_dbar_second!}
  \frac{\partial}{\partial\overline{k}} \,\mu^+_\tau(z,k) = 
 \frac{(\TT^+(\QQ^+\T^+_\tau))(k)}{4\pi\overline{k}}e^{-i(kz+\overline{k}\overline{z})}\overline{\mu^+_\tau(z,k)}
\end{equation}
as $\eps\ra 0$. Thus right hand sides of the $\dbar$ equations (\ref{scatevol_dbar_again!}) and (\ref{scatevol_dbar_second!}) must be the same elements of $\mathcal{D}^{\prime}(\C\setminus0)$:
$$
\frac{(\TT^+(\QQ^+\T^+_\tau))(k)}{4\pi\overline{k}}e^{-i(kz+\overline{k}\overline{z})}\overline{\mu^+_\tau(z,k)}=
  \frac{e^{i\tau(k^3+\overline{k}^3)}\T^+_0(k)}{4\pi\overline{k}}e^{-i(kz+\overline{k}\overline{z})}\overline{\mu^+_\tau(z,k)}.
$$
Since $e^{-i(kz+\overline{k}\overline{z})}$ never vanishes, we get for $k\not=0$
$$
 (\TT^+(\QQ^+\T^+_\tau))(k)\overline{\mu^+_\tau(z,k)} =
  \T^+_\tau(k)\overline{\mu^+_\tau(z,k)}.
$$
Now the function $\mu^+_\tau(\,\cdot\,,k)$ cannot be identically zero since $\mu^+_\tau(\,\cdot\,,k)-1\in W^{1,\tilde{p}}(\R^2)$. Furthermore, $\T^+_\tau$ is a smooth function and $\TT^+(\QQ^+\T^+_\tau)\in\mathcal{D}^{\prime}(\C)$. We may conclude that $(\TT^+(\QQ^+\T^+_ \tau))(k)=\T^+_ \tau(k)$ for all $k\not=0$.

\end{proof}

\section{Conclusion}\label{sec:concl}

\noindent
The following Corollary is the main result of this paper. 
\begin{cor}\label{conclcor}
Let $q_0\in C^\infty_0(\R^2)$ be a real-valued, smooth, compactly supported con\-ducti\-vity-type potential (in the sense of Definition \ref{def:condtype}) with $\gamma\equiv 1$ outside supp$(q_0)$. Assume the rotational symmetry $q_0(z)=q_0(|z|)$. Denote $\T^+_\tau(k)=e^{i\tau(k^3+\overline{k}^3)}(\TT^+q_0)(k)$.
Then $\qis_\tau:=\QQp\T^+_\tau$ is for all $\tau>0$ a real-valued, continuous, conductivity-type potential in $L^p(\R^2)$ with any $1<p<2$ satisfying the following estimate: $|\qis_\tau(z)|\leq C\langle z\rangle^{-2}$.
Moreover, $\qis_\tau$ has no exceptional points and $(\TT^+(\QQ^+\T^+_\tau))(k)=\T^+_\tau(k)$ for all $k\not=0$.
\end{cor}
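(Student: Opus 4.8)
The plan is to assemble the corollary from the theorems and lemmas of Sections~\ref{sec:radial}--\ref{sec:scatevol} together with the cited results of Nachman~\cite{nachman} and L-M-S~\cite{LMS}; the work is essentially one of checking hypotheses, since all the analytic difficulty has already been dealt with in those statements.

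First I would record the properties of the initial scattering data. Since $q_0\in C^\infty_0(\R^2)$ is a real-valued conductivity-type potential with $\gamma\equiv 1$ outside $\mathrm{supp}(q_0)$, Nachman's results~\cite{nachman} give that $q_0$ has no exceptional points and that $\TT^\pm q_0=\T^\pm_0$ are well-defined; the quantitative behaviour $|\T^\pm_0(k)|\leq C|k|^2$ for small $|k|$ and $\T^\pm_0(k)/k,\ \T^\pm_0(k)/\overline{k}\in\SI(\C)$ is extracted from the estimates in~\cite{LMS} (and, in the rotationally symmetric case, from~\cite{SMI2000}). The real-valuedness of $q_0$ also yields the symmetry (\ref{pm_tksym}) relating $\T^+_0$ and $\T^-_0$, so the assumptions on $\T^-_0$ follow from those on $\T^+_0$. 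These are precisely the hypotheses needed by Theorems~\ref{thm:weak_cond_type} and~\ref{mainThm2}.

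Next I would invoke the Theorem of Section~\ref{sec:radial}: because $q_0$ is in addition rotationally symmetric, $\qis_\tau:=\QQp\T^+_\tau$ is real-valued for every $\tau\geq 0$. With real-valuedness in hand, Theorem~\ref{thm:weak_cond_type} applies and shows that $\qis_\tau$ is of conductivity type in the sense of Definition~\ref{def:condtype} and has no exceptional points, for all $\tau\geq 0$. The continuity of $\qis_\tau$, its membership in $L^p(\R^2)$ for every $1<p<2$, and the estimate $|\qis_\tau(z)|\leq C\langle z\rangle^{-2}$ come directly from \cite[Theorem~1.2]{LMS}. Finally, since $q_0$ has no exceptional points, its scattering data satisfies the stated bounds, and $\qis_\tau$ is real-valued, Theorem~\ref{mainThm2} yields the scattering identity $(\TT^+(\QQ^+\T^+_\tau))(k)=\T^+_\tau(k)$ for all $k\neq 0$. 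Collecting these conclusions proves the corollary.

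I do not expect a genuine obstacle at this stage; the only point requiring care is the first step, namely confirming that the precise decay near $k=0$ and the Schwartz-type smoothness of $\T^\pm_0$ hold for smooth, compactly supported, conductivity-type initial data, since every subsequent theorem is phrased under exactly those hypotheses. This is where one must lean on the fine analysis of the complex geometrical optics solutions carried out in~\cite{nachman} and~\cite{LMS} rather than on anything proved in the present paper.
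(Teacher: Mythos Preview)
Your proposal is correct and follows essentially the same route as the paper: verify the scattering-data hypotheses via \cite{SMI2000} and \cite{LMS}, obtain real-valuedness from the theorem of Section~\ref{sec:radial}, then apply Theorem~\ref{thm:weak_cond_type} for conductivity type and absence of exceptional points, \cite[Theorem~1.2]{LMS} for continuity and the $\langle z\rangle^{-2}$ decay, and Theorem~\ref{mainThm2} for the scattering identity. The only cosmetic difference is that the paper also name-checks Lemma~\ref{lemma:qdef}, but since Theorem~\ref{thm:weak_cond_type} already invokes that lemma in its own proof, this is not a gap in your argument.
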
 
\begin{proof}
We know from \cite[Thm 3.1]{SMI2000} and \cite[Thms 2.1 and 2.2]{LMS} that
$$
  |\T^{\pm}_0(k)|\leq C|k|^2 \mbox{ for small }|k|,\qquad\frac{\T^{\pm}_0(k)}{k}\in \SI(\C),\qquad   \frac{\T^{\pm}_0(k)}{\overline{k}}\in \SI(\C).
$$
The evolving potential is a well-defined continuous function $\qis_\tau:\R^2\ra\C$ satisfying $|\qis_\tau(z)|\leq C\langle z\rangle^{-2}$ by \cite[Thm 1.2]{LMS}.

The analysis in Section \ref{sec:radial} above shows that $\qis_\tau$ is real-valued. Then the assumptions of Lemma \ref{lemma:qdef} and Theorem \ref{thm:weak_cond_type} are satisfied and we may conclude that $\qis_\tau$ is of conductivity type in the sense of Definition \ref{def:condtype} for all $\tau\geq 0$ and that $\qis_\tau$ has no exceptional points. 
Furthermore, the assumptions of Theorem \ref{mainThm2} are fulfilled, and so $(\TT^+(\QQ^+\T^+_\tau))(k)=\T^+_\tau(k)$ for all $k\not=0$.
\end{proof}

Our results do in fact apply to more general initial data than the rotationally symmetric cases of Corollary \ref{conclcor}. Namely, Sections \ref{sec:preserveconductivity} and \ref{sec:scatevol} only assume that $q_0$ has nicely behaving scattering data and that the inverse scattering evolution stays real-valued. (The latter assumption is natural: the right-hand side of the Novikov-Veselov equation (\ref{evolution}) is real-valued, so real-valuedness of $q_0$ implies that of $\qnv_{\tau}$ as well. If $\qis_\tau$ had nonzero imaginary part, the identity $\qnv_{\tau} =\qis_{\tau}$ could not hold.)

Actually, we can already describe a class of nonsymmetric initial data for the inverse scattering evolution. Let $\qis_{\tau}$ be an evolution with rotationally symmetric initial data satisfying the assumptions of Corollary \ref{conclcor}, and fix $\tau^\prime>0$. Define $\widetilde{q}_0(z):=\qis_{\tau^\prime}(z)$ and note that the following diagram is well-defined:
$$
\begin{picture}(380,130)
\thinlines
 \put(30,110){\vector(1,0){130}}
 \put(200,110){\vector(1,0){130}}
 \put(0,107){$\T^+_0(k)$}
 \put(165,107){$\T^+_{\tau^\prime}(k)$}
 \put(335,107){$\T^+_{\tau^\prime+\tau}(k)$}
 \put(55,115){$\exp(i \tau^\prime(k^3+\overline{k}^3))\cdot$}
 \put(225,115){$\exp(i \tau(k^3+\overline{k}^3))\cdot$}
 \put(12,51){\vector(0,1){47}}
 \put(-7,72){$\TT^+$}
 \put(0,35){$q_0(z)$}
 \put(156,72){$\TT^+$}
 \put(175,51){\vector(0,1){47}}
 \put(181,97){\vector(0,-1){47}}
 \put(184,72){$\QQp$}
 \put(332,72){$\TT^+$}
 \put(351,51){\vector(0,1){47}}
 \put(357,97){\vector(0,-1){47}}
 \put(360,72){$\QQp$}
 \put(165,35){$\qis_{\tau^\prime}(z)$}
 \put(335,35){$\qis_{\tau^\prime+\tau}(z)$}
 \put(175,25){\rotatebox{-90}{$=$}}
 \put(351,25){\rotatebox{-90}{$=$}}
 \put(165,0){$\widetilde{q}_0(z)$}
 \put(340,0){$\widetilde{q}_{\tau}(z)$}
\end{picture}
$$
Now $\widetilde{\qis}_0(z)$ is valid initial data (if we substitute $\TT^+(\widetilde{\qis}_0(z))(0):=\TT^+(\qis_{\tau^\prime}(z))(0)=0$ making the initial scattering data smooth) and leads to a real-valued inverse scattering evolution $\widetilde{q}_{\tau}$. But what do we know about symmetries of $\widetilde{q}_{\tau}$? The analysis  in Section \ref{sec:radial} implies that 
$$
  \widetilde{q}_{\tau}(z) = \widetilde{q}_{\tau}(z_\fii) = \widetilde{q}_{\tau}(z_{-\fii}), \qquad \widetilde{q}_{\tau}(z) = \overline{\widetilde{q}_{\tau}(\overline{z})}
$$
for $\fii=2\pi/3$. But our theoretical results do not rule out the possibility of rotational symmetry of $\widetilde{q}_{\tau}$. However, in Part II of this paper we compute $\qis_{\tau}(z)$ numerically for several rotationally symmetric initial data and observe that for $\tau^\prime>0$ we have in general $\qis_{\tau^\prime}(z)\not=\qis_{\tau^\prime}(|z|)$; this is verified numerically beyond doubt. (In addition, $\qis_{\tau}$ appears not to be compactly supported, but the numerical evidence is non-conclusive due to the finite computational domain.) 

Finally, we mention that the numerical evidence presented in Part II of this paper strongly suggests that $\qnv_{\tau} =\qis_{\tau}$ for evolutions with initial data satisfying the assumptions of Corollary \ref{conclcor}.

\section*{Acknowledgements}

\noindent
The work of ML and SS was supported by the Academy of Finland (Finnish Centre of Excellence
in Inverse Problems Research 2006-2011, decision number 213476). In addition, SS was funded in part by the Grant-in-Aid for JSPS Fellows (No. 0002757) of the Japan Society for the Promotion of Science. This project  was partially conducted at  the Mathematical Sciences Research Institute,  Berkeley, whose hospitality is gratefully acknowledged.  

\appendix

\section{Connections to previous work}\label{appendix:connections}

\subsection{Remarks on the results of Boiti, Leon, Manna and Pempinelli} \label{appendix_Boiti}

It was shown in \cite{boiti}  that if
a function $\mathbf{p}(z,\tau)$ does not have exceptional points and
evolves according to the evolution equation
\begin{equation}\label{BLMPeq}
  \frac{\partial \mathbf{p}}{\partial \tau} = -a_0\dz^3 \mathbf{p} - a_0\dzb^3 \mathbf{p} + 3a_0\dz(\mathbf{p}\,\dz\dzb^{-1}\mathbf{p}) + 3a_0\dzb(\mathbf{p}\,\dzb\dz^{-1}\mathbf{p}),
\end{equation}
where $a_0\in\R$ is a constant, then the scattering data
\begin{equation}\label{def:F1}
  F_1(k) = \frac{1}{4\pi}\int_{\R^2}e^{i\overline{k}\overline{x}}\mathbf{p}(z,\tau)\phi(z,k)dx
\end{equation}
evolves in $\tau$ as follows:
\begin{equation}\label{F1evol}
  \frac{\partial}{\partial \tau}F_1(k) = i(a_0k^3 + \overline{a_0}\overline{k}^3)F_1(k).
\end{equation}
The function $\phi(z,k)$ appearing in (\ref{def:F1}) is a solution of the Schr\"odinger equation
\begin{equation}\label{boitiShrode}
  (\dz\dzb - \mathbf{p})\phi(\,\cdot\,,k) = 0,\qquad \phi(z,k) \sim e^{ikz}.
\end{equation}
Formula (\ref{BLMPeq}) corresponds to \cite[formula (2.12)]{boiti},
formula (\ref{def:F1}) corresponds to \cite[formula (3.26)]{boiti},
formula (\ref{F1evol}) corresponds to \cite[formula (5.7)]{boiti}, and
formula (\ref{boitiShrode}) corresponds to \cite[formulae
(2.3),(3.1),(3.4)]{boiti}.

Set $\qis_\tau(z)=\mathbf{p}(z/2,\tau)$. Then
equation (\ref{BLMPeq}) takes the form
\begin{equation}\label{NVqfromp}
   \frac{\partial \qis_\tau}{\partial \tau}  = -8a_0\dz^3 \qis_\tau - 8a_0\dzb^3 \qis_\tau + 6a_0\dz(\qis_\tau\,\dz\dzb^{-1}\qis_\tau) + 6a_0\dzb(\qis_\tau\,\dzb\dz^{-1}\qis_\tau).
\end{equation}
Further, noting that $\dz\dzb=\frac 14\Delta$ and comparing
(\ref{Schrode}) and (\ref{exp_asymp}) with (\ref{boitiShrode}) shows
that
\begin{equation}\label{psiphiconversion}
  \phi(z,k) = \psi^+(2z,k/2).
\end{equation}
Thus we can use (\ref{psiphiconversion}) and  (\ref{def:F1}) to compute
\begin{eqnarray}
  F_1(k)
  &=&\nonumber
  \frac{1}{8\pi}\int_{\R^2}e^{i\overline{k}\overline{(z/2)}}\mathbf{p}(z/2,\tau)\phi(z/2,k)dz\\
  &=&\nonumber
  \frac{1}{8\pi}\int_{\R^2}e^{i\overline{(k/2)}\overline{z}}\qis_\tau(z)\psi^+(z,k/2)dz\\
  &=&\label{F1integralform}
  \frac{1}{8\pi}\T^+_\tau(k/2).
\end{eqnarray}
A combination of (\ref{F1evol}) and (\ref{F1integralform}) then yields
\begin{equation}\label{a0tevol}
  \frac{\partial}{\partial \tau}\T^+_\tau(k) = i(8a_0k^3+8\overline{a_0}\overline{k}^3)\T^+_\tau(k).
\end{equation}
The desired evolution of scattering data is achieved by the choice $a_0=1/8$. Then equation
(\ref{NVqfromp}) takes exactly the form (\ref{evolution}).

\subsection{Remarks on the results of Tsai}\label{appendix_Tsai}


It would be tempting to follow Tsai's proof in \cite{tsai3} to show that the inverse scattering evolution $q_\tau$ actually coincides with $\qnv_{\tau}$ in diagram (\ref{diagram}). However, the class of initial data used in \cite{tsai3} excludes conductivity-type potentials, which in turn are the only known initial data with no exceptional points. The specific problem with Tsai's proof is the requirement $m_+(x,0)=0$ on the line following equation (3.12) in \cite{tsai3}. In our notation this would mean $\mu^+(z,0)=0$ which never holds for conductivity-type potentials because $\mu^+(z,0)=\sqrt{\gamma(z)}\geq c>0$. Finding such assumptions on $q_0$ that $q_\tau=\qnv_{\tau}$ in diagram (\ref{diagram}) remains an open theoretical problem. 

Tsai gives in \cite{tsai,tsai3} a formal derivation of a hierarchy of evolution equations (parametrized by $n=1,3,5,\dots$) using the maps
$\TT^+: q\mapsto \T^+$ and $\QQ^+$ in the following inverse scattering scheme:
$$
  \begin{CD}
  \T_0^+ @>\exp(-i^n (k^n+\overline{k}^n)\tau)\cdot>>\T_\tau^+\\
  @A{\TT^+}AA @VV{\QQ^+}V\\
  q_0  @>>\mbox{\small \ }> q_\tau
  \end{CD}
$$
The non-periodic version of the Novikov-Veselov equation (\ref{evolution}) appears as the case $n=3$. We remark that all the results in this paper hold for the cases $n>3$ as well. We just need to replace the angle $\fii=2\pi/3$ by $\fii=2\pi/n$ in Section \ref{sec:radial}.

\section{Proof of Lemma \ref{Lemma_q_p}}\label{Appendix:Lemma}

Clearly $\qis\in L^p(\R^2)$ with any $1<p<2$. Note that assumption (\ref{MUEST0}) and formula (\ref{muepsdef}) imply (for the same constant $c$, independent of $\eps$)
\begin{equation}\label{muepspos}
  0<c\leq |\mu^{(\eps)}(z)|,
\end{equation}
so there is no division by zero in the definition (\ref{qapproxi}). Write
$$
  q^{(\eps)}(z)-q(z) = \Delta\mu(z)\left(\frac{1}{\mu^{(\eps)}(z)} -  \frac{1}{\mu(z)}\right)
  -\Big(\Delta\mu(z)-\Delta \mu^{(\eps)}(z)\Big)\frac{1}{\mu^{(\eps)}(z)}.
$$
Then (\ref{Lpconvergence}) follows from the triangle inequality and (\ref{muepspos}) if we prove the following
two equations:
\begin{eqnarray}
  \label{Lpconvergence2}
  &&\lim_{\eps\ra 0}\left\|\Delta\mu\left(\frac{1}{\mu^{(\eps)}} -  \frac{1}{\mu}\right)\right\|_{L^p(\R^2)} = 0, \\
  \nonumber\\
  \label{Lpconvergence3}
  &&\lim_{\eps\ra 0}\|\Delta\mu-\Delta \mu^{(\eps)}\|_{L^p(\R^2)} = 0.
\end{eqnarray}

Let us prove (\ref{Lpconvergence3}) first. Calculate
\begin{equation}\label{deltamuformula}
  \Delta \mu^{(\eps)} = (\mu-1)\Delta\fii_\eps + 2\nabla\fii_\eps\cdot\nabla\mu + \fii_\eps\Delta\mu,
\end{equation}
and further, with the notation $(x,y)=z=x+iy$,
\begin{eqnarray}
  \frac{\partial \fii_\eps(z)}{\partial x}
  &=& \label{fiider1}
  -2\eps^2 x \fii_\eps(z),\\
  \frac{\partial \fii_\eps(z)}{\partial y}
  &=& \label{fiider1y}
  -2\eps^2 y \fii_\eps(z),\\
  \Delta \fii_\eps(z)
  &=& \label{fiider2}
  4\eps^2(\eps^2 x^2 +\eps^2 y^2 - 1) \fii_\eps(z).
\end{eqnarray}
Direct computation shows that for any $s\geq 0$ we have
\begin{eqnarray}\label{fiiepspnorm}
  \big\| |z|^s\fii_\eps(z)\big\|_p 
  &=& \nonumber
  \left(\int_{\R^2}|z|^{ps}\fii(\eps z)^p dz \right)^{1/p} \\
  &=& \nonumber
  \left(\int_{\R^2}|w|^{ps}\eps^{-ps}\fii(w)^p \frac{dw}{\eps^2} \right)^{1/p} \\
  &=& \label{Lpapu}
  \eps^{-s-2/p}\big\| |w|^s\fii(w)\big\|_p.
\end{eqnarray}
Here and below we use the sorthand notation $\|\,\cdot\,\|_{L^p(\R^2)}=\|\,\cdot\,\|_p$. 

By (\ref{deltamuformula}) we see that
\begin{equation}
  \Delta\mu-\Delta \mu^{(\eps)}
  =\label{deltamu_diff_formula}
  - 2\nabla\fii_\eps\cdot\nabla\mu  - (\mu-1)\Delta\fii_\eps + (1-\fii_\eps)\Delta\mu.
\end{equation}
Using (\ref{fiider1}), (\ref{fiiepspnorm}) and assumption (\ref{MUEST2}) we get
\begin{eqnarray*}
  \left\| \frac{\partial\fii_\eps}{\partial x}\,\,\frac{\partial \mu}{\partial x}\right\|_p
  &=& 
  2\eps^2\left(\int_{\R^2} \left|\frac{\partial\mu(z)}{\partial x}\right|^p |x|^p\fii_\eps(z)^p dz \right)^{1/p}\\
  &\leq& 
  2\eps^2\left(\int_{\R^2} \langle z\rangle^{-2p} |z|^p \fii_\eps(z)^p dz \right)^{1/p}\\
  &\leq& 
  2C\eps^2\|\fii_\eps\|_p \leq
  2C\eps^{2-2/p} \|\fii\|_p,
\end{eqnarray*}
and the same estimate holds for $x$ replaced by $y$. Therefore, since $p>1$ implies $2-2/p>0$, we have proved the convergence
\begin{equation}\label{qconverest1}
  \lim_{\eps\ra 0}\|\nabla\fii_\eps\cdot\nabla\mu \|_p = 0.
\end{equation}

Using (\ref{MUEST1}), (\ref{fiider2}) and
(\ref{fiiepspnorm})  with $s=0$ and $s=1$ we get
\begin{eqnarray*}
  \left\| (\mu-1) \, \Delta \fii_\eps \right\|_p
  &=& \nonumber
  4\eps^2 \| (\mu-1) (\eps^2 x^2 +\eps^2 y^2 - 1) \fii_\eps \|_p\\
  &\leq& \nonumber
  4\eps^2 \| (\mu-1) \eps^2|z|^2 \fii_\eps \|_p
  + 4\eps^2\| (\mu-1) \fii_\eps \|_p\\
  &\leq& \nonumber
  C\eps^4 \|  \,|z| \fii_\eps(z)\, \|_p
  + 4\eps^2 \|\mu-1\|_\infty \,\|  \fii_\eps \|_p\\
  &\leq& 
  C\eps^{3-2/p} \| \fii \|_p+C\eps^{2-2/p} \| \fii \|_p,
\end{eqnarray*}
and it follows from $2/p<2$ that 
\begin{equation}\label{qconverest2}
  \lim_{\eps\ra 0}\|(\mu-1) \, \Delta \fii_\eps\|_p = 0.
\end{equation}

We denote the characteristic function of the disc $B(0,\eps^{-1/4})$ by $\chi_{|z|<\eps^{-1/4}}(z)$ and will use the inequality
\begin{equation}\label{chitriangle}
\|f\|_p\leq \|\chi_{|z|<\eps^{-1/4}}f\|_p + \|\chi_{|z|\geq\eps^{-1/4}}f\|_p
\end{equation}
in the sequel. Note that  $1-\fii_\eps(z)=1-\fii_\eps(|z|)$ is monotonically increasing in $|z|$ and that the area of the disc $B(0,\eps^{-1/4})$ is $\pi\eps^{-1/2}$ and that by assumption (\ref{MUESTQ}) we have
\begin{equation}\label{deltamuestimate}
  |\Delta \mu(z)|\leq \|\mu\|_\infty\left|\frac{\Delta\mu}{\mu}\right|
  =\|\mu\|_\infty |q(z)| \leq C\langle z \rangle^{-2}.
\end{equation} 
Estimate using Taylor expansion of the exponential function near $\eps=0$
\begin{eqnarray}
  \|\chi_{|z|<\eps^{-1/4}}\big(1-\fii_\eps\big)\Delta\mu\|_p^p
  &\leq& \nonumber
 \pi\eps^{-1/2} \|(\Delta\mu)^p\|_\infty\,\|\chi_{|z|<\eps^{-1/4}}|1-\fii_\eps|^p\|_\infty\\
  &\leq& \nonumber
  C\eps^{-1/2}\big|1-\exp(-\eps^2\eps^{-1/2})\big|^p\\
  &\leq& \label{apukaava1}
  C\eps^{(3p-1)/2}.
\end{eqnarray}
In the unbounded set $\{z\,:\,|z|\geq\eps^{-1/4}\}$ we use (\ref{deltamuestimate}) and the fact $|1-\fii_\eps(z)|\leq 1$ to compute
\begin{eqnarray}
  \|\chi_{|z|\geq\eps^{-1/4}}\big(1-\fii_\eps\big)\Delta\mu\|_p^p
  &\leq& \nonumber
  C \|\chi_{|z|\geq\eps^{-1/4}}\langle z \rangle^{-2}\|_p^p\\
  &\leq& \nonumber
  C^\prime \int_{\eps^{-1/4}}^\infty r^{-2p}rdr\\
  && \nonumber\\
  &\leq& \label{apukaava2}
  \frac{C^\prime\eps^{(p-1)/2}}{2(p-1)}.
\end{eqnarray}
Now (\ref{apukaava1}) and (\ref{apukaava2}) together with (\ref{chitriangle}) and $p>1$ give
\begin{equation}  \label{qconverest3}
  \lim_{\eps\ra 0}\|(1-\fii_\eps)\Delta\mu\|_p=0,
\end{equation}
and (\ref{qconverest1}), (\ref{qconverest2}), (\ref{qconverest3}) and (\ref{deltamu_diff_formula})  imply (\ref{Lpconvergence3}). 

It remains to prove (\ref{Lpconvergence2}).
The lower bound (\ref{muepspos}) gives for any $z\in \R^2$
\begin{equation}
  \left|\frac{1}{\mu^{(\eps)}} -  \frac{1}{\mu}\right| 
  =
  \left|\frac{\mu-\mu^{(\eps)}}{\mu\mu^{(\eps)}}\right|
  \leq
  c^{-2}|\mu-\mu^{(\eps)}|
  = \label{epsinftyest}
  C\left|\big(1-\fii_\eps\big)\big(\mu-1\big)\right|.
\end{equation}
We compute first inside the disc $B(0,\eps^{-1/4})$ using (\ref{epsinftyest}):
\begin{eqnarray}
&& \nonumber
\left\|\chi_{|z|<\eps^{-1/4}}\Delta\mu\left(\frac{1}{\mu^{(\eps)}} -  \frac{1}{\mu}\right)\right\|_p^p \\
&\leq&\nonumber
C\pi\eps^{-1/2}\left\|(\Delta\mu)^p\right\|_\infty
\left\|\big(\mu-1\big)^p\right\|_\infty 
\|\chi_{|z|<\eps^{-1/4}}|1-\fii_\eps|^p\|_\infty \\
&\leq&\label{apukaavaB2}
  C\eps^{(3p-1)/2},
\end{eqnarray}
where the second inequality follows as in (\ref{apukaava1}). In the complement of the disc $B(0,\eps^{-1/4})$ we use (\ref{mu0_est1}), (\ref{deltamuestimate}), (\ref{epsinftyest}) and $|1-\fii_\eps|\leq 1$ to get
\begin{eqnarray}
\left\|\chi_{|z|\geq\eps^{-1/4}}\Delta\mu\left(\frac{1}{\mu^{(\eps)}} -  \frac{1}{\mu}\right)\right\|_p^p 
&\leq&\nonumber
C\left\|\chi_{|z|\geq\eps^{-1/4}}|\Delta\mu|^p \big(\mu-1\big)^p\right\|_\infty \\
&\leq&\nonumber
C^\prime\left\|\chi_{|z|\geq\eps^{-1/4}}\langle z\rangle^{-3p}\right\|_\infty\\
  &\leq& \nonumber
  C^\prime \int_{\eps^{-1/4}}^\infty r^{-3p}rdr\\
 &&\nonumber\\
&\leq&\label{apukaavaB3}
  \frac{C^\prime\eps^{(3p-2)/4}}{3p-2}.
\end{eqnarray}
Now (\ref{Lpconvergence2}) follows from (\ref{chitriangle}), (\ref{apukaavaB2}) and (\ref{apukaavaB3}).

\end{document}